\documentclass[11pt,reqno]{amsart}
\usepackage{mathrsfs}
\usepackage{amssymb}
\usepackage{amsfonts}
\usepackage{amsmath}
\usepackage{txfonts}
\usepackage{dsfont}
\usepackage{amscd}
\usepackage[colorlinks, linkcolor=blue, anchorcolor=blue, citecolor=blue]{hyperref}
\textheight 23cm \textwidth 16cm \topmargin -0.8cm
\begin{document}
\setlength{\oddsidemargin}{0cm} \setlength{\evensidemargin}{0cm}
\baselineskip=20pt

\theoremstyle{plain} \makeatletter
\newtheorem{theorem}{Theorem}[section]
\newtheorem{Prop}[theorem]{Proposition}
\newtheorem{lemma}[theorem]{Lemma}
\newtheorem{Corollary}[theorem]{Corollary}
\newtheorem{remark}[theorem]{Remark}

\theoremstyle{definition}
\newtheorem{notation}[theorem]{Notation}
\newtheorem{exam}[theorem]{Example}
\newtheorem{prop}[theorem]{Proposition}
\newtheorem{conj}{Conjecture}
\newtheorem{prob}[theorem]{Problem}
\newtheorem{Rem}[theorem]{Remark}
\newtheorem{claim}{Claim}
\newtheorem{Definition}[theorem]{Definition}

\newcommand{\SO}{{\mathrm S}{\mathrm O}}
\newcommand{\SU}{{\mathrm S}{\mathrm U}}
\newcommand{\Sp}{{\mathrm S}{\mathrm p}}
\newcommand{\so}{{\mathfrak s}{\mathfrak o}}
\newcommand{\Ad}{{\mathrm A}{\mathrm d}}
\newcommand{\m}{{\mathfrak m}}
\newcommand{\g}{{\mathfrak g}}
\newcommand{\h}{{\mathfrak h}}

\numberwithin{equation}{section}
\title[Naturally reductive $(\alpha_1, \alpha_2)$ metrics]{Naturally reductive $(\alpha_1, \alpha_2)$ metrics}

\author{Ju Tan}
\address[Ju Tan]{School of Mathematics and Physics,
Anhui University of Technology, Maanshan, 243032, P.R. China}\email{tanju2007@163.com}
\author{Ming Xu$^*$}
\address[Ming Xu] {School of Mathematical Sciences,
Capital Normal University,
Beijing 100048,
P.R. China}
\email{mgmgmgxu@163.com}

\thanks{$^*$Ming Xu is the corresponding author.}

\begin{abstract} Let $F$ be a homogeneous $(\alpha_1,\alpha_2)$ metric on the reductive homogeneous manifold $G/H$. Firstly, we characterize the natural reductiveness of $F$ as a local $f$-product between naturally reductive Riemannian metrics.
Secondly, we prove the equivalence among several properties of $F$ for its mean Berwald curvature and S-curvature. Finally, we find an explicit flag curvature formula when $F$ is naturally reductive.
%is naturally reductive if and only  has weakly isotropic $S$-curvature if and only if $F$ has vanishing $S$-curvature. Based this result, we obtain that $S=0$ if and only if $E=0$. Then, we give a structural characterization of naturally reductive $(\alpha_1,\alpha_2)$ metrics, namely, any non-Riemannian naturally reductive homogeneous $(\alpha_1,\alpha_2)$-metric  is locally isometric to an $f$-product Finsler metric arising from two naturally reductive homogeneous Riemannian metrics.  Finally, we give the flag curvature formula for naturally reductive $(\alpha_1,\alpha_2)$ metrics.
%
%

\noindent\textbf{Mathematics Subject Classification(2020)}: 53C30, 53C60.\\
\noindent\textbf{Keywords}: $(\alpha_1,\alpha_2)$ metric; homogeneous Finsler space; naturally reductive; S-curvature.
\end{abstract}

\maketitle
\section{Introduction}

In \cite{KN1963}, Kobayashi and Nomizu introduced an important subclass of homogenous Riemannian metrics, which are called naturally reductive metrics and generalize symmetric and normal homogeneous metrics.
Further generalization was given in \cite{KV1991}, where Kowalski and Vanhecke proposed the geodesic orbit property, i.e., all geodesics are the orbits of one-parameter isometric subgroups. Indeed, before \cite{KV1991}, Kaplan had already constructed the first example of
 geodesic orbit space which is not naturally reductive \cite{Ka1983}. The
natural reductiveness in homogeneous Riemannian geometry
has been extensively studied in homogeneous Riemannian geometry for many decades. For example,
in \cite{DZ1979}, D'Atri and Ziller investigated the left invariant naturally reductive Riemannian metrics
on compact Lie groups and compact homogeneous spaces, they gave a classification of naturally reductive Einstein
metrics on compact simple Lie groups and obtained lots of new examples of Einstein metrics, and
in \cite{Go1985}, Gordon studied the structure of naturally reductive Riemannian metrics
on non-compact Lie groups and classified naturally reductive nilmanifolds. The natural reductiveness of weakly symmetric spaces, D'Atri spaces, etc, are also explored. See (\cite{BV1996}\cite{Gr1972}\cite{Ko1983}\cite{KV1991}\cite{Wo1968}\cite{Zi1996})
and the references therein.

In this paper, we discuss the natural reductiveness in Finsler geometry, which was given from different view points, by Latifi in \cite{La2007}, and by S. Deng and Z. Hou
in \cite{DH2010}.
%The naturally reductive property in homogeneous Finsler geometry
%have a much shorter history. In \cite{DH2004}, Deng and Hou defined naturally reductive (Finsler) metrics and proved that left invariant naturally reductive Finsler metrics on a nilpotent Lie group must be Riemannian. It must be notified that, in \cite{La2010}, Latifi proposed another definition for the natural reductiveness in Finsler geometry.
The equivalence between these two definitions was proved in \cite{ZYD2022}. It should be remarked that,
unlike the situation in Riemannian geometry, the Finslerian natural reductiveness can not be implied from the normal homogeneity, and in some sense, it is even stronger, because naturally reductive Finsler metrics are Berwaldian.

The first goal of this paper is to characterize the natural reductiveness of non-Riemannian
$(\alpha_1,\alpha_2)$ metric. The $(\alpha_1,\alpha_2)$ metrics are defined in \cite{DX2016}.
We believe that they generalize Randers metrics \cite{Ra1941} and $(\alpha,\beta)$-metrics\cite{Ma1992}, and then share some similar properties \cite{DX2016}\cite{XD2021}\cite{XM2021}. The $f$-product
in \cite{CS2004} is a special case of Berwald $(\alpha_1,\alpha_2)$ metric. It is defined as a Finsler metric $F=f(\alpha_1,\alpha_2)$ on $M_1\times M_2$, where each $\alpha_i$ is a Riemannian metric on $M_i$. A general $(\alpha_1,\alpha_2)$ metric can be similarly presented as $F=f(\alpha_1,\alpha_2)$, but each $\alpha_i$ is only defined
on a (possibly non-integrable) distribution $\mathcal{V}_i$ in $TM$ with $TM=\mathcal{V}_1+\mathcal{V}_2$.

From a geometric point of view,
we prove the following explicit and simple description.

\begin{theorem}\label{main-thm-1}
Any non-Riemannian naturally reductive homogeneous $(\alpha_1,\alpha_2)$ metric is locally isometric to an $f$-product between two naturally reductive
Riemannian metrics.
\end{theorem}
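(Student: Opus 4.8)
The plan is to work infinitesimally at the reductive decomposition $\g = \h \oplus \m$, exploiting the fact (noted in the excerpt) that naturally reductive Finsler metrics are Berwaldian. First I would record the algebraic structure of an $(\alpha_1,\alpha_2)$ metric on $\m$: there is an $\Ad(H)$-invariant orthogonal splitting $\m = \m_1 \oplus \m_2$ with respect to some auxiliary inner products $\alpha_1, \alpha_2$ (restricted from bilinear forms on the distributions $\mathcal{V}_1, \mathcal{V}_2$), and $F|_{\m} = f(\alpha_1(\cdot), \alpha_2(\cdot))$ with $f$ non-Riemannian, i.e. genuinely depending on both arguments in a non-quadratic way. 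The natural reductiveness hypothesis gives a bilinear map (the Finslerian analogue of the naturally reductive condition) that, because $F$ is Berwald, forces the Chern connection to agree with the Levi-Civita connection of a Riemannian metric; I would use this to pin down how $\h$ acts and how the bracket $[\m,\m]_{\m}$ interacts with the splitting.

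**Key steps.**

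The first substantial step is to show that the distributions $\mathcal{V}_1$ and $\mathcal{V}_2$ (equivalently, the subspaces $\m_1, \m_2$) are each $\Ad(H)$-invariant and, more importantly, that the naturally reductive structure respects the splitting — concretely, that the naturally reductive bilinear map $U$ (or the torsion-free condition together with the metric compatibility) does not mix $\m_1$ and $\m_2$ in a way incompatible with a product. Here I expect to invoke the classification/structure results for $(\alpha_1,\alpha_2)$ metrics from \cite{DX2016} and the characterization of Berwald $(\alpha_1,\alpha_2)$ metrics, to argue that the "mean" of the two Riemannian pieces must be parallel. The second step is to upgrade this infinitesimal splitting to a local Riemannian product: since $F$ is Berwald, its Chern connection is the Levi-Civita connection of an auxiliary Riemannian metric $\alpha = \alpha_1 + \alpha_2$ (or a suitable combination), and the parallelism of the two distributions under this connection, via the de Rham decomposition theorem, yields local factors $M_1 \times M_2$ with $\alpha_i$ living on $M_i$. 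The third step is to check that each factor metric $\alpha_i$ is itself naturally reductive as a Riemannian metric — this should follow because the one-parameter isometry groups witnessing natural reductiveness of $F$ restrict to the factors, using that isometries of a Berwald metric are isometries of its associated Riemannian metric. Finally, one reassembles $F = f(\alpha_1, \alpha_2)$ on the product and observes this is by definition the $f$-product.

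**Main obstacle.**

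The delicate point is the passage from "$\Ad(H)$-invariant orthogonal splitting of $\m$" to "the splitting is parallel for the relevant connection," i.e. ruling out that the two distributions shear into one another along geodesics. In the general homogeneous (non-product) presentation of an $(\alpha_1,\alpha_2)$ metric the distributions $\mathcal{V}_i$ need not be integrable, so I must use the non-Riemannian hypothesis on $f$ crucially: if $f$ were quadratic the metric would be Riemannian and no splitting obstruction arises, but precisely because $f$ is a non-trivial function of two variables, the Berwald condition forces the nonlinear part of $F$ to be parallel, which in turn forces each eigen-distribution of the "ratio" $\alpha_1^2/\alpha_2^2$ to be parallel. Making this rigorous — showing that Berwald plus non-Riemannian $(\alpha_1,\alpha_2)$ implies each $\mathcal{V}_i$ is parallel, hence integrable with totally geodesic leaves — is the technical heart, and I would expect to lean on the spray/Chern-connection computation for $(\alpha_1,\alpha_2)$ metrics already developed in \cite{DX2016} and \cite{XD2021}, combined with the standard de Rham argument to finish.
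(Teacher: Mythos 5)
There is a genuine gap, and it sits exactly where the theorem's content lies. Your plan replaces the naturally reductive hypothesis by its consequence (Berwald), and runs a holonomy/de Rham argument: granting the (fillable, Szab\'o-type) claim that parallel transport of a Berwald non-Riemannian $(\alpha_1,\alpha_2)$ metric preserves the splitting, this at best yields that $(M,F)$ is locally an $f$-product of two \emph{Riemannian} manifolds --- it cannot yield that the factors are \emph{naturally reductive}, nor even that they are homogeneous. Your proposed fix, that ``the one-parameter isometry groups witnessing natural reductiveness restrict to the factors,'' is not a valid characterization of natural reductiveness: geodesics being orbits of one-parameter isometry subgroups is the geodesic orbit property, which is strictly weaker (Kaplan's example), and in any case you never produce a group acting transitively on a leaf together with a reductive complement satisfying $\langle y,[y,\cdot]_{\m}\rangle=0$. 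You also assume that the Chern connection of $F$ is the Levi-Civita connection of $\alpha$ (``$\alpha_1+\alpha_2$ or a suitable combination''); Deng--Hou's definition only supplies \emph{some} Riemannian metric with the same connection, and the fact that $\alpha$ itself is naturally reductive for the given decomposition is a conclusion to be proved (Corollary \ref{cor-2} in the paper), not an input.

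What is missing is the paper's central computation (Lemma \ref{lemma-2}): evaluate the naturally reductive identity $g_y(y,[y,u]_{\m})=0$ on vectors $y=ae_1+be_n$ using the explicit fundamental tensor of an $(\alpha_1,\alpha_2)$ norm, and use the non-Riemannian hypothesis --- i.e.\ the linear independence of $\partial_u L$ and $\partial_v L$ (Lemma \ref{lemma-1}) --- to force the bracket relations $[\m_i,\m]\subset\h+\m_i$, hence $[\m_1,\m_2]\subset\h$, that each $\mathfrak{k}_i=\h+\m_i$ is a subalgebra, and that $\langle y_i,[y_i,\m_i]_{\m_i}\rangle=0$. These identities simultaneously give the integrability of $\mathcal{V}_i$ with homogeneous leaves $K_i/H$, their total geodesy in $(G/H,\alpha)$, the local Riemannian product, and the natural reductiveness of each factor $(K_i/H,\alpha_i)$ --- the step your outline cannot reach. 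The ``parallelism'' issue you single out as the main obstacle is real but secondary; without the bracket identities (or an equivalent algebraic use of the naturally reductive condition beyond Berwaldness), the plan stalls before the actual conclusion of the theorem.
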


For connected and simply connected manifolds,
the local $f$-product description in Theorem \ref{main-thm-1} is global and the inverse of Theorem \ref{main-thm-1} is valid (see Corollary \ref{cor-1}). With some minor changes, the proof of Theorem \ref{main-thm-1} might argue a similar description for non-Riemannian naturally reductive $(\alpha,\beta)$ manifold.

Notice that naturally reductive Finsler metrics are Berwald \cite{DH2010}. These results are coherent to Szab\'{o}'s description for Berwald metrics, i.e., some local ``product" among
Riemannian manifolds and non-Riemannian affine symmetric spaces \cite{Sz1981}. Here the speciality of $(\alpha_1,\alpha_2)$ metric type specifies how the ``product" is built and constrains the product factor and the factor number.

%From the geometric point of view,
%we see that the local $f$-product description is explicit and simple.  From the algebraic point of view,

Switching to the algebraic point of view, then we prove
that, if the non-Riemannian homogeneous $(\alpha_1,\alpha_2)$ metric $F$ is defined using the homogeneous Riemannian metric $\alpha$, then the natural
reductiveness of $F$ is in fact inherited from $\alpha$ (see Corollary \ref{cor-2}). This byproduct generalizes the corresponding result for $(\alpha,\beta)$ metrics (see Theorem 3.7 in \cite{Pa2022}). Meanwhile, we see that
a non-Riemannian homogeneous $(\alpha_1,\alpha_2)$ manifold $(M,F)$ may achieve its natural reductiveness for diffferent homogeneous space representations $M=G/H$.
The assertion that $M$ is locally an $f$-product does not imply $G$ is also locally a product (see Example \ref{example}).

The second task of this paper is to discuss the curvature properties of a homogeneous $(\alpha_1,\alpha_2)$ manifold.

The S-curvature was found
by Z. Shen when he proved the volume comparison theorem in Finsler geometry \cite{Sh1997}. The Hessian of the S-curvature $S(x,y)$ for its $y$-entries essentially provides the mean Berwald curvature (E-curvature in short). E-curvature helps us study the celebrated Landsberg Conjecture \cite{LZ2020}. In general Finsler geometry, there are many properties related to the S-curvature (for example, the vanishing, constant, isotropic, and weakly isotropic S-curvature conditions),  and there are many others related to the E-curvature (for example, the vanishing, constant and isotropic E-curvature conditions).
These properties are closely related, and some are equivalent
in homogeneous Finsler geometry  \cite{XD2015}. If we specify the homogeneous metric to be of Randers or $(\alpha,\beta)$ type, even more equivalences can be proved \cite{De2009}\cite{DW2010}\cite{WZ2021}. We transport these observation to homogeneous $(\alpha_1,\alpha_2)$ metrics and prove the following theorem.

\begin{theorem}\label{main-thm-2}
For a homogeneous $(\alpha_1,\alpha_2)$ metric, the following are equivalent:
\begin{enumerate}
\item it has weakly isotropic S-curvature;
\item it has vanishing S-curvature;
\item it has isotropic E-curvature;
\item it has vanishing  E-curvature.
\end{enumerate}
\end{theorem}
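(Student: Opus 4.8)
The plan is to close the cycle $(2)\Rightarrow(1)\Rightarrow(3)\Rightarrow(2)$ and to add the branch $(2)\Rightarrow(4)\Rightarrow(3)$, so that all four conditions become equivalent; only $(3)\Rightarrow(2)$ carries real content. I begin with the formal implications. Vanishing S-curvature is weakly isotropic S-curvature with $c\equiv 0$ and vanishing $1$-form, so $(2)\Rightarrow(1)$. Since the mean Berwald curvature is, up to the universal factor $\tfrac12$, the vertical Hessian of the S-curvature, $E_{ij}=\tfrac12\,\partial^2 S/\partial y^i\partial y^j$ — the term $y^m\partial_{x^m}\log\sigma_F$ occurring in $S$ is linear in $y$ and so invisible to the Hessian — we get $S\equiv 0\Rightarrow E\equiv 0$, i.e. $(2)\Rightarrow(4)$. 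Finally, by the identity $F^{-1}h_{ij}=\partial^2F/\partial y^i\partial y^j$ (equivalently $h_{ij}=FF_{y^iy^j}$), both a weakly isotropic S-curvature and a vanishing E-curvature differentiate to the isotropic form $E_{ij}=\tfrac{n+1}{2}c(x)F^{-1}h_{ij}$, so $(1)\Rightarrow(3)$ and $(4)\Rightarrow(3)$.

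The heart of the matter is $(3)\Rightarrow(2)$. Assuming $E_{ij}=\tfrac{n+1}{2}c(x)F^{-1}h_{ij}$, the identities above give $\big(S-(n+1)cF\big)_{y^iy^j}=0$; since $S$ and $F$ are positively homogeneous of degree one in $y$ and the fiber dimension is at least two, a function on $TM\setminus 0$ with vanishing vertical Hessian that is homogeneous of degree one must be fiberwise linear, so $S=(n+1)c(x)F+\theta$ for some $1$-form $\theta=\theta_i(x)\,dx^i$. It remains to show $c\equiv 0$ and $\theta\equiv 0$, which will give $(2)$. For the first, integrate $\tfrac{d}{dt}\tau(\gamma(t),\dot\gamma(t))=S(\gamma(t),\dot\gamma(t))=(n+1)c(\gamma(t))+\theta(\dot\gamma(t))$ along unit-speed geodesics: by homogeneity the distortion $\tau$ is bounded on the isometry-invariant indicatrix bundle, and the standard argument for homogeneous Finsler spaces (see \cite{XD2015}) then forces $c\equiv 0$. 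Hence $S=\theta$ is a $1$-form and $E\equiv 0$.

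To kill $\theta$ one must now invoke the $(\alpha_1,\alpha_2)$ structure. Write $M=G/H$ with reductive decomposition $\g=\h\oplus\m$ and $\m=\m_1\oplus\m_2$, so that $F=f(\alpha_1,\alpha_2)$ with each $\alpha_i$ induced by the same $\Ad(H)$-invariant inner product restricted to $\m_i$. Feeding the homogeneous spray data of such a metric into $S=\partial G^m/\partial y^m$ (the volume term drops for the invariant Busemann--Hausdorff measure) produces, on the tangent space $\m$ at the origin, an explicit expression for $S(o,\cdot)$: modulo the Riemannian S-curvatures of the factors $\alpha_i$, which vanish, the remainder is a definite combination of the bracket terms $\langle[y,\cdot\,]_\m,\cdot\,\rangle$ weighted by partial derivatives of $f$, and this remainder fails to be linear in $y$ precisely when $f$ is not affine, i.e. when $F$ is non-Riemannian. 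Imposing that $S$ equal the linear function $\theta$ therefore forces each of those terms — and then $\theta$ itself — to vanish, yielding $(2)$ and closing the cycle. I expect this last step to be the main obstacle: producing a workable closed form for the S-curvature, equivalently the spray divergence, of a general homogeneous $(\alpha_1,\alpha_2)$ metric and then reading off vanishing from ``linear in $y$'' is the only place the argument uses more than general homogeneity, and it rests on the special algebraic shape of $(\alpha_1,\alpha_2)$ metrics — the block structure of $g_y$ and of the Cartan tensor adapted to $\m=\m_1\oplus\m_2$ already used for Theorem \ref{main-thm-1} — exactly as in the Randers and $(\alpha,\beta)$ treatments of \cite{De2009}\cite{DW2010}\cite{WZ2021}.
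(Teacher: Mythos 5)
Your formal implications via $E_{ij}=\tfrac12 S_{y^iy^j}$ and $h_{ij}=FF_{y^iy^j}$ are fine and agree with the paper's reduction: everything hinges on showing that weakly isotropic S-curvature (equivalently isotropic E-curvature) forces $S\equiv0$. It is precisely there that your proposal has two gaps. First, your elimination of $c$ by integrating $\tfrac{d}{dt}\tau=S$ along unit-speed geodesics and invoking boundedness of the distortion is the standard argument for \emph{isotropic} S-curvature; with the unknown one-form $\theta$ present, the term $\int_0^t\theta(\dot\gamma(s))\,ds$ can itself grow linearly in $t$ and may cancel $(n+1)ct$, so boundedness of $\tau$ alone does not yield $c\equiv0$, and the cited homogeneous result does not cover the weakly isotropic case. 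The fix uses the $(\alpha_1,\alpha_2)$ structure in a much cheaper way than you anticipate: every $(\alpha_1,\alpha_2)$ metric is reversible, hence $S(x,\cdot)$ is odd in $y$, while $(n+1)c(x)F(x,\cdot)$ is even and $\epsilon(x,\cdot)$ is odd; adding $S(x,y)+S(x,-y)$ kills $c$ immediately and leaves $S=\epsilon$.

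Second, and more seriously, the elimination of $\theta$ — the only step with real content — is not actually carried out: you propose to compute a closed-form S-curvature for a general homogeneous $(\alpha_1,\alpha_2)$ metric and to read off vanishing from ``fails to be linear in $y$ unless $f$ is affine,'' but no such formula or non-linearity argument is supplied, and you yourself flag it as the main obstacle; the claim that imposing $S=\theta$ forces the individual bracket terms, and then $\theta$, to vanish is asserted rather than proved (it also silently excludes the Riemannian case, which the theorem covers). The paper avoids any spray computation: by the linear $O(n_1)\times O(n_2)$-symmetry of an $(\alpha_1,\alpha_2)$ norm, the Cartan tensor, hence the mean Cartan tensor $I_y$, vanishes for every nonzero $y\in\m_1\cup\m_2$, and the homogeneous S-curvature formula $S(o,y)=I_y(\eta(y))$ (Theorem \ref{preliminary-theorem}) then gives $S(o,y)=0$ on $(\m_1\cup\m_2)\setminus\{0\}$; since $S(o,\cdot)=\epsilon(o,\cdot)$ is linear and $\m_1\cup\m_2$ spans $\m=\m_1+\m_2$, it vanishes identically, and homogeneity gives $S\equiv0$. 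If you want to salvage your route, you would in effect be reproving Theorem 4.2/4.3 of \cite{DX2016}; the symmetry-plus-$I_y(\eta(y))$ argument is the short path.
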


Flag curvature generalizes sectional curvature in Riemannian geometry \cite{BCS2000}. But its calculation in general and homogeneous Finsler geometry is much harder. However, for a naturally reductive Finsler metric, the vanishing of the spray vector field greatly simplifies the homogeneous flag curvature formula \cite{Hu2015}. For a naturally reductive $(\alpha_1,\alpha_2)$ metric of the form $F=\alpha\phi(\alpha_2/\alpha)$, we present an explicit flag curvature formula (see Theorem \ref{main-thm-3}).

The arrangement of this paper is as the following. In Sect. 2, we summarize some
preliminaries in general and homogeneous Finsler geometry.
In Sect. 3, we prove Theorem \ref{main-thm-1}. In Sect. 4, we prove Theorem \ref{main-thm-2} and calculate the flag curvature formula for a naturally reductive $(\alpha_1,\alpha_2)$ metric.
%In particular, we introduce the weakly isotropic $E$-curvature
%and naturally reductive Finsler metrics. In Sect. 3, we  prove homogeneous
% $(\alpha_1,\alpha_2)$ metric  $F$ has weakly isotropic S-curvature if and only if $F$ has vanishing S-curvature. Based this result, we obtain that $S=0$ if and only if $E=0$.
%In Sect. 4, we give a  characterization of naturally reductive $(\alpha_1,\alpha_2)$ metrics, namely, any non-Riemannian naturally reductive homogeneous $(\alpha_1,\alpha_2)$ metric  is locally isometric to an $f$-product Finsler metric arising from two naturally reductive homogeneous Riemannian metrics.
%Finally, in Sect. 5, we give the explicit formula for flag curvature of naturally reductive $(\alpha_1,\alpha_2)$ metrics.

\section{Preliminaries}\label{preliminaries}

In this section, we summarize some fundamental knowledge on general Finsler geometry from \cite{BCS2000}\cite{Sh2001},
recall the basic settings of homogeneous Finsler geometry in
\cite{De2012}. The notion of $(\alpha_1,\alpha_2)$ metric is from \cite{DX2016}.

\subsection{Minkowski norms and Finsler metrics}
\begin{Definition}
Let $\mathbf{V}$ be a finite dimensional real vector space. A {\it Minkowski norm} on $\mathbf{V}$ is a continuous function $F:\mathbf{V}\rightarrow[0,+\infty)$   satisfying the following properties:
\begin{enumerate}
  \item regularity: $F$ is positive and smooth on $\mathbf{V}\backslash\{0\}$;
  \item positive 1-homogeneity: $F(\lambda y)=\lambda F(y)$ for any $y\in \mathbf{V}$  and $\lambda\geq0$;
  \item strong convexity: for any $y\in V\backslash\{0\}$, the {\it fundamental tensor}
\[g_y(u,v):=\frac{1}{2}\frac{\partial^2}{\partial s\partial t}[F^2(y+su+tv)]_{s=t=0}                                                  \]
is an inner product on $V$.
\end{enumerate}
\end{Definition}

Sometimes, the {fundamental tensor} is referred to the positive definite Hessian matrix $(g_{ij})=(\frac12\frac{\partial^2}{\partial y^i\partial y^j}F^2)$
with respect to the linear coordinate $y=y^ie_i\neq0$ for a basis $\{e_i,\forall i\}$ on
$\mathbf{V}$.  We use it and its inverse $(g^{ij})$ to move indices up and down.

%The inner product $g_y$ is called the fundamental form in the direction $y$. The pair $(V, F)$ is called a Minkowski
%space.
%A Minkowski norm $F$ is said to be reversible if $F(-y)=F(y)$ for $y\in V$.
%Fix a basis $\{e_i\}$ for $V$, for $0\neq y=y^ie_i$,
%\[g_{ij}(y):=g_y(e_i,e_j)=\frac{1}{2}[F^2]_{y^iy^j}(y).
%\]
%Here $[F^2]_{y^iy^j}(y)$ denote the partial derivative of $F^2$  with respect to $y^i$ and $y^j$. We have
%\begin{eqnarray*}
%% \nonumber to remove numbering (before each equation)
%  && F(y)=\sqrt{g_{ij}(y)y^iy^j} , \quad g_y(u, v)=g_{ij}(y)u^iv^j, \quad u=u^ie_i, \quad v=v^je_j, \quad y=y^ie_i.
%\end{eqnarray*}

\begin{Definition}
A {\it Finsler metric} on a smooth manifold $M$ is a continuous function $F: TM\rightarrow [0, +\infty)$ such that
\begin{enumerate}
  \item $F$ is smooth on the slit tangent bundle $TM \backslash\{0\}$;
  \item the restriction of $F$ to each tangent space $T_xM$ is a Minkowski norm.
\end{enumerate}
\end{Definition}
We also call the pair $(M,F)$ a {\it Finsler manifold} or a {\it Finsler space}.

For example,
a Finsler metric is {\it Riemannian} when the fundamental tensor
$(g_{ij}(x,y))$ depends on $x\in M$ only, for any standard local coordinate $x=(x^i)\in M$ and $y=y^i\partial_{x^i}\in T_xM$, or equivalently, the {\it Cartan tensor} $$C_y(u,v,w)=\frac12\frac{d}{dt}|_{t=0}g_{y+tw}(u,v)
=\frac14\frac{\partial^3}{\partial r\partial s\partial t}|_{r=s=t=0}F^2(y+ru+sv+tw)$$vanishes
everywhere.

A {\it Randers metric} has the form $F=\alpha+\beta$, where $\alpha$ is a Riemannian metric and
$\beta$ is an one-form with pointwise $\alpha$-norm smaller than 1. An {\it $(\alpha,\beta)$ metric} has the form $F=\alpha\phi(\beta/\alpha)$, where $\phi(s)$ is a positive smooth function of one variable.

An {\it $(\alpha_1,\alpha_2)$ metric} on the smooth manifold $M$
is defined as the following \cite{DX2016}. Let $\alpha$ be a Riemannian metric on $M$ and $TM=\mathcal{V}_1+\mathcal{V}_2$ be an
$\alpha$-orthogonal decomposition of $TM$ into two
linear sub-bundles. Then a Finsler metric is called an $(\alpha_1,\alpha_2)$
metric if $F(y)$ only depends on the values $\alpha(y_1)$ and $\alpha(y_2)$, where $y=y_1+y_2\in TM$ with $y_i\in\mathcal{V}_i$. Using the notation
$\alpha_i(y)=\alpha(y_i)$, we may present an $(\alpha_1,\alpha_2)$ metric as $F=\sqrt{L(\alpha^2_1,\alpha^2_2)}$ for some positively 1-homogeneous smooth real function $L:[0,+\infty)\times[0,+\infty)\rightarrow[0,+\infty)$. Sometimes, we present an $(\alpha_1,\alpha_2)$ as $F=\alpha\phi(\alpha_2/\alpha)$ or $F=\alpha\psi(\alpha_1/\alpha)$, mocking the $(\alpha,\beta)$ metrics. In each tangent space, the $(\alpha_1,\alpha_2)$ metric $F$ defines a Minkowski norm with linear $O(n_1)\times O(n_2)$ symmetry, which is called an {\it $(\alpha_1,\alpha_2)$ norm}. When $\alpha_i$ is the Riemannian metric on $M_i$, then $(\alpha_1,\alpha_2)$ metric $F=\sqrt{L(\alpha^2_1,\alpha^2_2)}$ on $M_1\times M_2$ is a Berwald $(\alpha_1,\alpha_2)$ metric (see Example 4.3.1 in \cite{CS2004}). We will call it an {\it $f$-product metric}.
%An $(\alpha_1,\alpha_2)$
%Sometimes, we present an $(\alpha_1,\alpha_2)$ metric as $F=\sqrt{L(\alpha_1^2,\alpha_2^2)}$, for some positively 1-homogeneous smooth real function $L:[0,+\infty)\times[0,+\infty)\rightarrow[0,+\infty)$.

A Minkowski norm or a Finsler metric is called reversible, if opposite (tangent) vectors have the same length. Obviously,
all $(\alpha_1,\alpha_2)$ metrics (including Riemannian metrics) are reversible, and most $(\alpha,\beta)$ metrics (including Randers metrics) are not. The $(\alpha,\beta)$ and $(\alpha_1,\alpha_2)$ metrics can achieve the maximal non-Euclidean linear symmetry degree in each tangent space, i.e., $O(n-1)$ and $O(n_1)\times O(n_2)$ respectively, where
$n=n_1+n_2$ is the dimension of the manifold. So the calculation and formulae for these metrics have relatively low complexity.

\subsection{E-curvature, S-curvature and flag curvature}

For a Finsler manifold $(M,F)$, the {\it geodesic spray} is the smooth tangent vector field $\mathbf{G}$ on $TM\backslash0$,
which can be locally presented as $$\mathbf{G}=y^i\partial_{x^i}-2\mathbf{G}^i\partial_{y^i}$$
with
$\mathbf{G}^i=\frac14 g^{il}([F^2]_{x^ky^l}y^k-[F^2]_{x^l})$ for each $i$, for any standard local coordinate.

We call $B^i_{jkl}=\frac{\partial^3\mathbf{G}^i}{\partial y^j\partial y^k\partial y^l}$ the {\it Berwald curvature} and
$E^{ij}=\frac12 B^k_{ijk}$ the {\it mean Berwald curvature} or {\it E-curvature}. We say $(M,F)$ has isotropic E-curvature if there exists a function $c(x)$ on $M$ such that
$E_{ij}=\frac{n+1}{2}c(x) F^{-1} h_{ij}$, in which $h_{ij}=FF_{y^iy^j}$ is the angular form.

Denote the B.H. measure of $(M,F)$ by $dV_{BH}=\sigma(x) dx^1\cdots dx^n$, in which
$$\sigma(x)=\frac{Vol(B_n(1))}{Vol\{(y^i)\in\mathbb{R}^n|
F(x,y^i\partial_{x^i})<1\}}.$$
Then the S-curvature $S:TM\backslash0\rightarrow\mathbb{R}$ is the derivative
of the distortion function
$$\tau(x,y)=\ln\frac{\sqrt{\det(g_{ij}(x,y))}}{\sigma(x)}$$
 in the direction of the geodesic spray. More explicitly,
the S-curvature can be presented by
\begin{equation}\label{010}
S(x,y)=\frac{\partial \mathbf{G}^i}{\partial y^i}(x,y)
-y^i\frac{\partial}{\partial x^i}\ln|\sigma(x)|,
\end{equation}
for any standard local coordinate.

We say $(M,F)$ has weakly isotropic S-curvature if there exists a function $c(x)$ and  a one-form $\epsilon$ on $M$, such that
\begin{equation}\label{101}
S(x,y)=(n+1)c(x)F(x,y)+\epsilon(x,y)
\end{equation}
is satisfied
at every $(x,y)\in TM\backslash{0}$. We say a weak isotropic S-curvature metric $F$ is isotropic, if the one-form $\epsilon$ in
(\ref{101}) vanishes. We say a isotropic S-curvature metric $F$
is of constant or vanishing S-curvature if we further have $c(x)\equiv\mathrm{const}$ or $c(x)\equiv0$
respectively.

\begin{remark} In this paper, the S-curvature is always referred to the B.H. measure. Generally speaking, it can be defined for any smooth measure on a Finsler manifold. Some S-curvature properties, like the vanishing, constant or isotropic S-curvature properties depend on the choice of the smooth measure, and some others, like the weakly isotropic S-curvature property does not.
\end{remark}

On a Finsler manifold $(M,F)$, the {\it flag curvature} is defined for each flag triple $(x,y,\mathbf{P})$ with $x\in M$, $y\in T_xM\backslash\{0\}$ and a tangent plane $\mathbf{P}\subset T_xM$ containing $y$. Suppose $\mathbf{P}=\mathrm{span}\{y,v\}$, then the flag curvature of this triple is
$$K(x,y,\mathbf{P})=\frac{g_y(R_y(v),v)}{
g_y(y,y)g_y(v,v)-g_y(y,v)^2}.$$
Here $R_y=R^i_k dx^k\partial_{x^i}:T_xM\rightarrow T_xM$
is the Riemann curvature determined by
$$R^i_k=2\frac{\partial\mathbf{G}^i}{\partial x^k}-y^j\frac{\partial^2\mathbf{G}^i}{\partial x^j\partial y^k}+2\mathbf{G}^j\frac{\partial^2\mathbf{G}^i}{
\partial y^j\partial y^k}-\frac{\partial \mathbf{G}^i}{\partial y^j}\frac{\partial \mathbf{G}^j}{\partial y^k}.$$

\subsection{Homogeneous Finsler space}

Let $(M,F)$ be a  Finsler manifold. We call $(M,F)$ or $F$
{\it homogeneous} if
the isometry group $I(M,F)$ acts transitively on $M$ \cite{De2012}. Since $I(M,F)$ is a Lie transformation group \cite{DH2002}\cite{MS1939}, we may present $M$ as the smooth coset space $G/H$ for any Lie subgroup $G$ in $I(M,F)$, where $H=\{g\in G| g\cdot o=o\}$ is the isotropy subgroup at $o=eH\in G/H$.

%When $M$ is connected, the connected isometry group $I_0(M,g)$ is also transitive on $M$(\cite{He1978}).

Denote $\mathfrak{g}=\mathrm{Lie}(G)$ and $\mathfrak{h}=\mathrm{Lie}(H)$. Since the $G$-action on $M$ is effective, $H$ is compactly imbedded. So we can find an $\mathrm{Ad}(H)$-complement $\mathfrak{m}$ of $\mathfrak{h}$ in $\mathfrak{g}$, i.e., a {\it reductive decomposition}
$
\mathfrak{g}=\mathfrak{h}+\mathfrak{m},
$ for $G/H$. In the Lie algebraic level, that means $[\mathfrak{h},\mathfrak{m}]\subset\mathfrak{m}$. Generally speaking, reductive decompositions for
$G/H$ are not unique.
%Notice that
%the $Ad(H)$-invariancy of $\m$ implies that $[\mathfrak{h}, \mathfrak{m}]\subset \mathfrak{m}$ in the Lie algebraic level, and its inverse is also valid when $H$ is connected.

A chosen reductive decomposition $\mathfrak{g}=\mathfrak{h}+\mathfrak{m}$ helps us study the homogeneous geometry of $G/H$, because
$\mathfrak{m}$ can be canonically identified as the tangent space $T_o(G/H)$, which is equivariant in the sense that the $\mathrm{Ad}(H)$-action on $\m$ coincides with the isotropy action on $T_o(G/H)$. Any vector $v\in\mathfrak{g}$ can be decomposed as $v=v_\mathfrak{h}+v_\mathfrak{m}$  according to the given reductive decomposition. Its $\mathfrak{m}$-summand coincides with the evaluation $V(o)$ for the vector field $V$ induced by  $v\in\mathfrak{g}$, i.e.,
$V(x)=\frac{d}{dt}|_{t=0}\exp tv\cdot x$.

 A homogeneous Finsler metric $F$ on $G/H$ can be one-to-one determined by its restriction $F(o,\cdot)$ to $T_o(G/H)=\m$, which is an arbitrary $\mathrm{Ad}(H)$-invariant Minkowski norm (for simplicity, we still use the same $F$ to denote this norm). The homogeneous metric $F$ is Riemannian if and only if it provides an $\mathrm{Ad}(H)$-invariant Euclidean norm $|\cdot|=\langle\cdot,\cdot\rangle^{1/2}$ on $\mathfrak{m}$.
 For a homogeneous metric $F$ on $G/H$, the vector field induced by any $v\in\mathfrak{g}$ is a Killing vector field.

When studying the local geometry of a homogeneous Finsler manifold, we may assume the connectedness and simple connectedness for $M=G/H$, and the connectedness for both $G$ and $H$. In this context, a homogeneous $(\alpha_1,\alpha_2)$ metric on $G/H$ can be determined by any arbitrary $\mathrm{Ad}(H)$-invariant $(\alpha_1,\alpha_2)$ norm on $\mathfrak{m}$. An $\mathrm{Ad}(H)$-invariant $(\alpha_1,\alpha_2)$ norm on $\mathfrak{m}$ can be determined by the following procedure.
Firstly we choose an $\mathrm{Ad}(H)$-invariant Euclidean norm $\alpha$ on $\mathfrak{m}$, and an $\mathrm{Ad}(H)$-invariant and $\alpha$-orthogonal decomopsition $\mathfrak{m}=\mathbf{V}_1+\mathbf{V}_2$ with each $n_i=\dim\mathbf{V}_i>0$. Nextly we choose some positively 1-homogeneous smooth real function $L:[0,+\infty)\times[0,+\infty)\rightarrow[0,+\infty)$ (the requirements for the function $L$ are implied by Theorem 3.2 in \cite{DX2016} or Theorem A in \cite{Xu2021}). Then the wanted norm can be presented as
\begin{equation*}\label{006}
F(y)=\sqrt{L(\alpha_1(y)^2,\alpha_2(y)^2)}=\sqrt{L(\alpha(y_1)^2,\alpha(y_2)^2)},\quad\forall  y=y_1+y_2\mbox{ with }  y_i\in\mathbf{V}_i.
 \end{equation*}
%Recall the projection: $\pi:G\rightarrow G/H, \pi(g)=gH$. Consider the differential $d\pi_e:\mathfrak{g}\rightarrow T_o(G/H)$, where $o=\pi(e)=H$. Let $X\in \mathfrak{g}$ and $exptX$ be the corresponding one parameter subgroup. Then
%\[
%d\pi_e(X)=\frac{d}{dt}(\pi\circ exptX)|_{t=0}=\frac{d}{dt}((exptX)H)|_{t=0},
%\]
%so it is easy to see that $\mathfrak{m}$ with the $\mathrm{Ad}(H)$-action can be naturally
%identified with the tangent space $T_oM$ with the isotropic $H$-action.
%In general, for any $X\in \mathfrak{g}$, we can define a vector field $X^{*}$ on $G/H$ by
%\[
%X^{*}_{gH}=\frac{d}{dt}((exptX)gH)|_{t=0},
%\]
%so $d\pi_e(X)=X_o^*$ and $d\pi_e(X_m)=X_o^*$, here $X_m$ denote the component of $X\in \mathfrak{g}$ in the subspace $\m$.
%The vector field $X^*$ is a Killing vector field, that is to say its flow $\phi_t$ are isometries. And we have $
%[X^*, Y^*]=-[X, Y]^*$.
%
%
%We assume the Finsler manifold $(M,F)$ admits a transitive action of some isometry subgroup $G\subset I(M,F)$, here $I(M,F)$ is  full isometry group of $(M, F$). S. Deng and Z. Hou   proved that $I(M,F)$ is a Lie transformation group (see \cite{DH2002}), so $G$ can be taken from (closed) Lie subgroups of $I(M,F)$ and $M$ can be presented as a smooth coset space $M=G/H$, in which $H$ is the isotropy subgroup
%at $o=eH\in G/H=M$. Obviously, the $G$-invariant Finsler metric $F$ on $M=G/H$
%is one-to-one determined by its restriction to $\mathfrak{m}=T_oM$,
%which is an $\mathrm{Ad}(H)$-invariant Minkowski norm.
\subsection{Natural reductiveness} \label{prepareforcls}

In homogeneous Riemannian geometry, the natural reductiveness is defined as following \cite{KN1963}.

\begin{Definition}
%\label{definition1}\cite{DH2010}
 The homogeneous Riemannian manifold $(M,g)$ or the metric $g$ is called {\it naturally reductive} for the homogeneous space representation $M=G/H$ with $G\subset I(M,g)$ and the reductive decomposition $\g=\h+\m$, if
\begin{eqnarray}\label{011}
% \nonumber to remove numbering (before each equation)
  \langle x, [z,y]_m\rangle +\langle  [z,x]_m,y\rangle=0,\quad\forall x,y,z\in \m.
\end{eqnarray}
or equivalently
\begin{equation}\label{012}
\langle y,[y,x]_\m\rangle=0,\quad\forall x,y\in\m.
\end{equation}
Here $\langle\cdot,\cdot\rangle$ is the $\mathrm{Ad}(H)$-invariant inner product on $\m$ induced by $g$.
%A homogeneous manifold $G/H$ with an invariant Finsler metric $F$ is called naturally reductive
%if there exists an invariant Riemannian metric $g$ on $G/H$ such that $(G/H, g)$ is naturally reductive and the connection of $g$ and $F$ coincide.
\end{Definition}

In this definition, the natural reductiveness for a homogeneous Riemannian manifold is more like an algebraic property, because it depends on
the representation $M=G/H$ and decomposition $\g=\h+\m$.  It can also be more geometrized as following.

\begin{Definition}\label{definition3}
The homogeneous Riemannian manifold $(M,g)$ or the metric $g$ is called {\it naturally reductive} if there exists a homogeneous space representation $M=G/H$ with $G\subset I(M,g)$ and a reductive decomposition $\g=\h+\m$ such that (\ref{011}) or (\ref{012}) is satisfied.
\end{Definition}

%From this definition, we see that the natural reductiveness of a homogeneous Riemannian manifold $(M,\mathrm{g})$
%depends on the coset space representation for $M$, i.e.,
%for two different representations $M=G_1/H_1=G_2/H_2$ with $G_1,G_2\in I(M,\mathrm{g})$, the natural reductivenss for  $(G_1/H_1,\mathrm{g})$ may not imply that for $(G_2/H_2,\mathrm{g})$.

In homogeneous Finsler geometry, there are several equivalent definitions or descriptions for the natural reductiveness.
The first one was proposed by Latifi \cite{La2007}.

\begin{Definition}\label{definition2}
The homogeneous Finsler manifold $(M,F)$ or the metric $F$ is called {\it naturally reductive} for the homogeneous space representation $M=G/H$ with $G\subset I(M,F)$ and
the reductive decomposition $\mathfrak{g}=\mathfrak{h}+\mathfrak{m}$, if we have
\begin{equation}\label{015}
g_y([w, u]_\mathfrak{m}, v)+g_y([w, v]_\mathfrak{m}, u)+2C_y([w, y]_\mathfrak{m}, u, v)=0, \forall y\in\mathfrak{m}\backslash\{0\},u,v,w\in \mathfrak{m}.
\end{equation}
\end{Definition}

The second one was proposed by Deng and Hou \cite{DH2010}.

\begin{Definition}
\label{definition1}
 The homogeneous Finsler manifold $(M,F)$ or the metric $F$ is called naturally reductive for the homogeneous space representation $M=G/H$ with $G\subset I(M,F)$ and the reductive decomposition $\g=\h+\m$,
if there exists a homogeneous Riemannian metric $\mathrm{g}$ on $M=G/H$ such that $g$ has the same connection as $F$ and its natural reductiveness is achieved for the given  decomposition.
\end{Definition}

The equivalence between Definition \ref{definition2} and Definition \ref{definition1} was proved in \cite{ZYD2022}.
Naturally reductive Finsler manifold (or metric) without mentioning the homogeneous space representation or the reductive decomposition can be similarly defined as Definition \ref{definition3}.
From Definition \ref{definition1}, we can immediately see that a naturally reductive Finsler metric is Berwald.

L. Huang provided an equivariant description using his spray vector field \cite{Hu2015}. For a homogeneous Finsler manifold $(G/H,F)$ with a reductive decomposition $\g=\h+\m$, the spray vector field $\eta:\m\backslash\{0\}\rightarrow\m$ is the $\mathrm{Ad}(H)$-invariant smooth map determined by
$$g_y(\eta(y),u)=g_y( y,[u,y]_\mathfrak{m}),\quad\forall y.$$
Then a homogeneous Finsler manifold $(G/H,F)$ is naturally reductive for a given reductive decomposition if and only if the corresponding spray vector field is constantly 0 \cite{DH2010}\cite{Hu2015}.

\begin{remark}\label{remark-2}
To be more self contained, here we briefly recall the proof in \cite{ZYD2022} for the equivalence between Definition \ref{definition2} and Definition \ref{definition1}. In \cite{DH2010}, Theorem 3.2 proves that Latifi's natural reductiveness implies Deng-Hou's, and Theorem 3.1 indicates that Deng-Hou's natural reductiveness implies $\eta\equiv0$. To prove the Latifi's natural reductiveness from $\eta\equiv0$, we consider the smooth vector field $W(y)=\mathrm{ad}_\m(w)y=[w,y]_\m$ for any $w\in\m$ and denote $\rho_t=\exp (t\cdot\mathrm{ad}_\m(w))$ the one-parameter subgroup $W$ generates in $GL(\m)$. When $\eta\equiv0$, $W$ is tangent to the indicatrix $F=1$. So each $\rho_t$ is a linear isomorphism preserving $F$, which is called a linear isometry for $F$ in \cite{XM2021}. A linear isometry for $F$ is automatically an isometry for the Hessian metric $g_y$ on $\m\backslash0$. So the generating vector field $W$ is Killing vector field for $(\m\backslash0,g_y)$, which is the meaning of (\ref{015}) in Definition \ref{definition1}.
\end{remark}

\section{Proof of Theorem \ref{main-thm-1}}

%\begin{theorem}\label{thm-1}
%Any non-Riemannian naturally reductive homogeneous $(\alpha_1,\alpha_2)$ space  is locally isometric to an $f$-product between two naturally reductive homogeneous
%Riemannian manifolds.
%\end{theorem}
%Let $(M,F)$ be a connected homogeneous Finsler manifold, which is naturally reductive with respect to the presentation $M=G/H$
%in which $G$ is a Lie subgroup  $I(M,F)$, and the reductive decomposition $\mathfrak{g}=\h+\m$. The universal cover $\tilde{M}$ of $M$ can be presented as $\tilde{M}=\tilde{G}/\tilde{H}$ where $\tilde{G}$ and $\tilde{H}$ are connected Lie groups with the same Lie algebras as $G$ and $H$ respectively. The Minkowski norm
%$F$ induces on $\m$ defines a homogeneous Finsler metric $\tilde{F}$ on
%$\tilde{G}/\tilde{H}$, which is locally isometric to $F$ and
%naturally reductive with respect to the same reductive decomposition. The statement in Theorem \ref{main-thm-1} is local, so we may replace $M=G/H$ with $\tilde{M}=\tilde{G}/\tilde{H}$, i.e.,
%assume that $G$ and $H$ are connected in the discussion below.

Let $(M,F)$ be a non-Riemannian homogeneous $(\alpha_1,\alpha_2)$ manifold, which is naturally reductive for the homogeneous space representation $M=G/H$ and the reductive decomposition $\g=\h+\m$. Then we have an $\mathrm{Ad}(H)$-invariant inner product $\alpha=\langle\cdot,\cdot\rangle^{1/2}$ on $\mathfrak{m}$ and an $\alpha$-orthogonal $\mathrm{Ad}(H)$-invariant decomposition $\mathfrak{m}=\mathfrak{m}_1+\mathfrak{m}_2$, such that the Minkowski norm $F$ on $\mathfrak{m}$ can be presented as
$F=\sqrt{L(\alpha_1^2,\alpha_2^2)}$
for some positively 1-homogeneous smooth function $L=L(u,v):[0,+\infty)\times[0,+\infty)\rightarrow[0,+\infty)$, where $\alpha_i(y)=\alpha(y_i)$ for $y=y_1+y_2$ with $y_i\in\mathfrak{m}_i$. This Minkowski norm is non-Euclidean, so we have

\begin{lemma}\label{lemma-1} $\frac{\partial}{\partial u}L$ and $\frac{\partial}{\partial v}L$ are linearly independent functions.
\end{lemma}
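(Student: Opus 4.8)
The plan is to argue by contradiction, reducing the claimed linear dependence of the two partials to the statement that $L$ is a \emph{linear} function of $(u,v)$, which is incompatible with $F$ being non-Riemannian. So suppose there are constants $(c_1,c_2)\neq(0,0)$ with $c_1\,\partial_uL+c_2\,\partial_vL\equiv 0$ on the domain of $L$. The idea is to combine this with Euler's identity for the positively $1$-homogeneous function $L$, namely $u\,\partial_uL+v\,\partial_vL=L$, to pin $L$ down completely.

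The hypothesis says precisely that the directional derivative of $L$ along the fixed vector $(c_1,c_2)$ vanishes identically on the convex domain $[0,+\infty)\times[0,+\infty)$. Writing $\ell(u,v):=c_2u-c_1v$ for the linear functional whose kernel is $\mathbb{R}(c_1,c_2)$, any two points of the domain with the same value of $\ell$ differ by a multiple of $(c_1,c_2)$, so by convexity $L$ factors through $\ell$: there is a one-variable function $\widetilde L$, smooth on $\ell$ of the domain (because $\ell$ is a submersion and $L$ is smooth off the origin), with $L=\widetilde L\circ\ell$. Positive $1$-homogeneity of $L$ then forces $\widetilde L$ to be positively homogeneous of degree one in a single real variable; together with its $C^1$-regularity this gives $\widetilde L(w)=\lambda w$ for a constant $\lambda$, and hence $L(u,v)=\lambda(c_2u-c_1v)$ is linear. (Alternatively, one can skip the factoring step: solving the $2\times 2$ linear system formed by the dependence relation and Euler's identity yields $\partial_uL=c_2L/\ell$ and $\partial_vL=-c_1L/\ell$ on $\{\ell\neq 0\}$, whence $d\log L=d\log|\ell|$ and, after integrating and using continuity across $\{\ell=0\}$, the same linear $L$.)

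The proof then closes by observing that a linear $L(u,v)=au+bv$ makes $F^2=L(\alpha_1^2,\alpha_2^2)=a\,\alpha_1^2+b\,\alpha_2^2$ a quadratic form in $y\in\m$, since each $\alpha_i(y)^2=\langle y_i,y_i\rangle$ is quadratic in $y$; thus the Cartan tensor of $F$ vanishes and $F$ is Euclidean, contradicting the standing hypothesis that the $(\alpha_1,\alpha_2)$ norm is non-Euclidean. This contradiction shows $\partial_uL$ and $\partial_vL$ cannot be linearly dependent.

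I expect the only real friction to be administrative rather than conceptual: checking that $\widetilde L$ is genuinely well defined and smooth across the zero locus of $\ell$ in the case where $(c_1,c_2)$ lies in the open first quadrant (here $\ell$ surjects onto $\mathbb{R}$ and the point $(c_1,c_2)$ itself, where $\ell=0$ and $L$ is smooth, supplies the needed regularity of $\widetilde L$ at $0$), and noting the elementary fact that a differentiable, positively $1$-homogeneous function on an interval containing $0$ equals its derivative at $0$ times the variable. The substantive content — and it is short — is just that one vanishing linear combination of $\partial_uL$ and $\partial_vL$, together with Euler's relation, leaves $L$ no freedom beyond being linear, i.e. leaves $F$ no option but to be Riemannian.
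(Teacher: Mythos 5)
Your proof is correct and follows essentially the same route as the paper: assume a linear dependence, combine it with Euler's identity $uL_u+vL_v=L$ to force $L$ to be linear, and contradict the non-Riemannian hypothesis (the paper simply fixes $v=1$ and solves the resulting equation to get $L(u,1)=c'(u+c)$, rather than factoring through $\ell$ or integrating $d\log L=d\log|\ell|$). Your treatment of a general dependence $c_1L_u+c_2L_v=0$ also covers the degenerate case $L_u\equiv0$, which the paper's normalization $L_v=cL_u$ silently omits, but the substance is identical.
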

\begin{proof}Suppose $\frac{\partial}{\partial v}L=c\frac{\partial}{\partial u}L$ for some constant $c$. By the positive 1-homogeneity of $L$, $L=u\frac{\partial}{\partial u}L+v\frac{\partial}{\partial v}L$, so we have
$L=(u+cv)\frac{\partial}{\partial u}L$, which can be easily solved when we fix $v=1$, i.e., $L(u,1)=c'\cdot(u+c)$ for some constant $c'$. Using the positive 1-homogeneity of $L$ again, we see that $L$ is a linear function, and this is a contradiction.
\end{proof}

\begin{lemma}\label{lemma-2} Keeping all above assumptions and notations, then we have:
\begin{enumerate}
\item
each $\mathfrak{k}_i=\h+\m_i$ is a Lie subalgebra of $\mathfrak{g}$, i.e.,
$[\mathfrak{m}_i,\mathfrak{m}_i]\subset\mathfrak{h}+
    \mathfrak{m}_i$ for $i=1,2$;
\item
  $[\mathfrak{m}_1,\mathfrak{m}_2]\subset
\mathfrak{h}$;
\item for any $y_i\in\m_i$, $\langle y_i,[y_i,\m_i]_{\m_i}\rangle=0$, where the subscript ${\m_i}$ denotes the projection with respect to decomposition $\mathfrak{k}_i=\mathfrak{h}+\mathfrak{m}_i$.
\end{enumerate}
\end{lemma}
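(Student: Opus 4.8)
The plan is to reduce the natural reductiveness of $F$ to an identity among Lie brackets, combining two ingredients recalled in Section \ref{preliminaries}: L. Huang's criterion that natural reductiveness of $F$ for the decomposition $\g=\h+\m$ amounts to the vanishing of the spray vector field $\eta$, equivalently $g_y(y,[u,y]_\m)=0$ for every $y\in\m\setminus\{0\}$ and every $u\in\m$; and the explicit fundamental tensor of an $(\alpha_1,\alpha_2)$ norm. First I would write down the latter. Setting $r=\alpha_1(y)^2=\langle y_1,y_1\rangle$, $s=\alpha_2(y)^2=\langle y_2,y_2\rangle$ and letting $L_u,L_v,L_{uu},\dots$ denote the partial derivatives of $L$ evaluated at $(r,s)$, a double differentiation of $F^2=L(\alpha_1^2,\alpha_2^2)$ in tangent directions yields a formula for $g_y(\cdot,\cdot)$ which, after evaluation on $(y,X)$ and use of the relations $rL_{uu}+sL_{uv}=0$ and $rL_{uv}+sL_{vv}=0$ (obtained by differentiating the homogeneity identity $rL_u+sL_v=L$), collapses to
\[
g_y(y,X)=L_u\langle y_1,X_1\rangle+L_v\langle y_2,X_2\rangle\qquad\text{for every }X=X_1+X_2\in\m .
\]
Since $g_y$ is nondegenerate, natural reductiveness is therefore equivalent to
\[
L_u(\alpha_1^2,\alpha_2^2)\,\langle y_1,\mathrm{pr}_{\m_1}[u,y]\rangle+L_v(\alpha_1^2,\alpha_2^2)\,\langle y_2,\mathrm{pr}_{\m_2}[u,y]\rangle=0\quad\text{for all }y=y_1+y_2\neq 0,\ u\in\m ,
\]
where $\mathrm{pr}_{\m_i}\colon\g\to\m_i$ is the projection attached to $\g=\h\oplus\m_1\oplus\m_2$.

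Next I would test this identity on well-chosen vectors. Taking $y=y_1\in\m_1\setminus\{0\}$ makes the second summand vanish identically, while $L_u(r,0)=L(1,0)>0$ by the $1$-homogeneity and positivity of $L$; hence $\langle y_1,\mathrm{pr}_{\m_1}[u,y_1]\rangle=0$ for all $u\in\m$, and symmetrically $\langle y_2,\mathrm{pr}_{\m_2}[u,y_2]\rangle=0$ for all $y_2\in\m_2$, $u\in\m$. Then for $y=t_1z_1+t_2z_2$ with $z_i\in\m_i$ fixed and $t_1,t_2>0$, expanding $[u,y]=t_1[u,z_1]+t_2[u,z_2]$ and pairing kills the terms of order $t_1^2$ and $t_2^2$ by what was just shown, and after dividing by $t_1t_2$ the identity becomes
\[
L_u(t_1^2,t_2^2)\,\langle z_1,\mathrm{pr}_{\m_1}[u,z_2]\rangle+L_v(t_1^2,t_2^2)\,\langle z_2,\mathrm{pr}_{\m_2}[u,z_1]\rangle=0 .
\]
As the two inner products are independent of $t_1,t_2$ while $(t_1^2,t_2^2)$ ranges over all of $(0,+\infty)^2$, Lemma \ref{lemma-1} forces $\mathrm{pr}_{\m_1}[u,z_2]=0$ and $\mathrm{pr}_{\m_2}[u,z_1]=0$ for all $u\in\m$, $z_1\in\m_1$, $z_2\in\m_2$.

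From these two vanishings I would read off the three assertions by specializing $u$. Putting $u\in\m_1$ in $\mathrm{pr}_{\m_2}[u,z_1]=0$, and $u\in\m_2$ in $\mathrm{pr}_{\m_1}[u,z_2]=0$, gives $[\m_1,\m_1]\subset\h+\m_1$ and $[\m_2,\m_2]\subset\h+\m_2$, which is (1); putting $u\in\m_1$ in $\mathrm{pr}_{\m_1}[u,z_2]=0$ and $u\in\m_2$ in $\mathrm{pr}_{\m_2}[u,z_1]=0$ gives $[\m_1,\m_2]\subset(\h+\m_1)\cap(\h+\m_2)=\h$, which is (2). Finally, once (1) holds, for $u\in\m_i$ the bracket $[u,y_i]$ lies in $\mathfrak{k}_i=\h+\m_i$, so $\mathrm{pr}_{\m_i}[u,y_i]$ is exactly its $\m_i$-component $[u,y_i]_{\m_i}$ relative to $\mathfrak{k}_i=\h+\m_i$; hence the identity $\langle y_i,\mathrm{pr}_{\m_i}[u,y_i]\rangle=0$ becomes $\langle y_i,[y_i,u]_{\m_i}\rangle=0$ for all $u\in\m_i$, which is (3).

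The double differentiation of $F^2$ and all the bracket bookkeeping are routine, so there is no deep obstacle here; the two points that genuinely deserve care are the legitimacy of evaluating $g_y$ at pure vectors $y\in\m_i\setminus\{0\}$ — which is fine because $F$ is by hypothesis an honest Finsler metric and $F^2=L(\alpha_1^2,\alpha_2^2)>0$ off the zero section, so $F$ and each $g_y$ are smooth there — and the fact that the linear independence supplied by Lemma \ref{lemma-1} must be applied as an identity in the two independent parameters $t_1^2,t_2^2$ over $(0,+\infty)^2$, which is precisely the form in which it was established.
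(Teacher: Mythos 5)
Your proposal is correct and follows essentially the same route as the paper: both reduce natural reductiveness to $g_y(y,[u,y]_\m)=0$, exploit the explicit fundamental tensor of the $(\alpha_1,\alpha_2)$ norm together with the $0$-homogeneity of $L_u,L_v$, first test pure vectors $y\in\m_i$ and then mixed vectors, and invoke Lemma \ref{lemma-1} (the non-Riemannian hypothesis) to kill the cross terms before reading off (1)--(3). The only difference is cosmetic: you work coordinate-free via $g_y(y,X)=L_u\langle y_1,X_1\rangle+L_v\langle y_2,X_2\rangle$ (Euler's theorem), whereas the paper carries out the same computation in an adapted orthonormal basis with structure constants $c^k_{ij}$.
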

\begin{proof}
Let $e_1,\cdots,e_n$ be any $\alpha$-orthonormal basis of $\mathfrak{m}$ such that $e_i\subset\mathfrak{m}_1$ for $i\leq n_1$ and $e_i\subset\mathfrak{m}_2$ for $i>n_1$.
Denote $c^k_{ij}$'s the coefficients in $[e_i,e_j]_\mathfrak{m}=c_{ij}^k e_k$.
Let $y=ae_1+be_n\in\mathfrak{m}$ be a nonzero vector. We use $L_1$, $L_{2}$, $L_{12}$, etc, to denote the partial derivatives of $L(u,v)$ evaluated at $(a^2,b^2)$. Then all the fundamental tensor coefficients of $F=\sqrt{L(\alpha_1^2,\alpha_2^2)}$ at $y$ are the following (see \cite{DX2016}):
\begin{eqnarray*}
&&g_{11}=L_1+2a^2L_{11},\quad
g_{nn}=L_2+2b^2L_{22},\quad
g_{1n}=2abL_{12},\\
&&g_{ii}=L_1,\ \forall 1\leq i\leq n_1,\quad
g_{ii}=L_2,\ \forall n_1<i\leq n,\quad g_{ij}=0\ \mbox{otherwise}.
\end{eqnarray*}
From this calculation we can easily see the positiveness of $L_1$, $L_2$,
$L_1+2a^2L_{11}$ and $L_2+2b^2L_{22}$ for all values of $a$ and $b$.

By the naturally reductive property of $F$, we have for each $i\in\{1,\cdots,n\}$ and each nonzero $y=ae_1+be_n$,
\begin{eqnarray}0&=&
g_y( y,[y,e_i]_\mathfrak{m})\nonumber\\&=&
g_y( ae_1+be_n, (ac_{1i}^1+bc_{ni}^1)e_1+
(ac_{1i}^n+bc_{ni}^n)e_n)\nonumber\\
&=& c_{1i}^1a^2g_{11}+ c_{1i}^n a^2 g_{1n}+
c_{ni}^1 ab g_{11}+ c_{ni}^n ab g_{1n}\nonumber\\
& &+c_{1i}^1 ab g_{1n}+ c_{1i}^n ab g_{nn}
+c_{ni}^1 b^2 g_{1n}+ c_{ni}^n b^2 g_{nn}\nonumber\\
&=& c^1_{1i} (a^2 g_{11}+ab g_{1n}) + c^n_{1i} (a^2 g_{1n}+ab g_{nn}) \nonumber\\
& &+c_{ni}^1(ab g_{11}+b^2 g_{1n})
+c_{ni}^n  (ab g_{1n}+b^2 g_{nn})\nonumber\\
&=&c^1_{1i}(a^2 g_{11}+ab g_{1n})
+c_{1i}^n ab(L_2+2(a^2 L_{12}+b^2 L_{22}))\nonumber\\
& &+c_{ni}^1 ab(L_1+2(a^2 L_{11}+ b^2 L_{12}))+c_{ni}^n(ab g_{1n}+b^2 g_{nn})\nonumber\\
&=&c^1_{1i}(a^2 g_{11}+ab g_{1n})
+ab (c_{1i}^n L_2+c_{ni}^1 L_1)+c_{ni}^n(ab g_{1n}+b^2 g_{nn}),\label{001}
\end{eqnarray}
in which the last equality uses the positive 0-homogeneity of $\frac{\partial}{\partial u}L$ and $\frac{\partial}{\partial v}L$, i.e., $$a^2L_{12}+b^2L_{22}=a^2L_{11}+b^2L_{12}=0.$$
When we have $y\in\mathfrak{m}_1\backslash\{0\}$, i.e, $a\neq0$ and $b=0$,  (\ref{001}) provides $c_{1i}^1=0$.
Similarly, when we choose $y$ from $\mathfrak{m}_2\backslash\{0\}$, we see $c_{ni}^n=0$. Then by the assumption that $F$ is non-Riemannian and Lemma \ref{lemma-1}, we get $c_{1i}^n=c_{ni}^1=0$.

%To summarize, we have
%\begin{equation}\label{007}
%c^1_{1i}=c^n_{ni}=c^1_{ni}=c^n_{1i}=0,\quad\forall i.
%\end{equation}
%Since $y_i\in\mathfrak{m}_i$ and $y=y_1+y_2$ are chosen arbitrarily, we can use the inner product $\langle\cdot,\cdot\rangle=\alpha^2(\cdot)$ to interpret (\ref{007}) as
%\begin{eqnarray}
%\langle y_i,[y_i,\m_i]_{\m}\rangle
%\end{eqnarray}

For any $y_1\in\mathfrak{m}_1$ and $y_2\in\mathfrak{m}_2$, we can find an $\alpha$-orthogonal
basis $e_1,\cdots,e_n$ as indicated above, such that $y_1\in\mathbb{R}e_1$ and $y_2\in\mathbb{R}e_n$.
By the vanishing of $c_{1i}^n$ and $c_{ni}^1$ for each $i$, we have
\begin{equation}\label{003}
\langle y_1, [y_2,\mathfrak{m}]_\mathfrak{m}\rangle=
\langle y_2,[y_1,\mathfrak{m}]\rangle=0, \quad\forall y_i\in\m_i,
\end{equation}
which can be interpreted as
\begin{equation}\label{002}
[\mathfrak{m}_1,\mathfrak{m}]\subset\mathfrak{h}+\mathfrak{m}_1 \quad\mbox{and}\quad\mbox{and}\quad
[\mathfrak{m}_2,\mathfrak{m}]
\subset\mathfrak{h}+\mathfrak{m}_2.
\end{equation}
From (\ref{002}),  we see
$[\mathfrak{m}_i,\mathfrak{m}_i]\subset\mathfrak{h}+
\mathfrak{m}_i$ for each $i$, and
$[\mathfrak{m}_1,\mathfrak{m}_2]\subset
(\mathfrak{h}+\mathfrak{m}_1)\cap(\mathfrak{h}+\mathfrak{m}_2)
=\mathfrak{h}$. This proves (1) and (2) in Lemma \ref{lemma-2}.

On the other hand, the vanishing of $c^1_{1i}$ and $c^n_{ni}$ provides
$\langle y_i,[y_i,\m_i]_{\m}\rangle=0$ for any $y_i\in\m_i$.
By (1) in Lemma \ref{lemma-2}, $[\cdot,\cdot]_\m$ coincides with $[\cdot,\cdot]_{\m_i}$ when restricted to $\m_i\times\m_i$. This proves (3) in Lemma \ref{lemma-2}.
\end{proof}

Now we are ready to prove Theorem \ref{main-thm-1}.

\begin{proof}[Proof of Theorem \ref{main-thm-1} when $H$ is connected]
We identify each $\mathfrak{m}_i$ as a subspace in $T_o(G/H)$. Then we have the
$G$-invariant distribution $\mathcal{V}_i=\cup_{g\in G}g_*(\mathfrak{m}_i)\subset T(G/H)$ on $G/H$.
The decomposition $\mathfrak{m}=\mathfrak{m}_1+\mathfrak{m}_2$ is orthogonal with respect to the inner product $\langle\cdot,\cdot\rangle=\alpha^2(\cdot)$, so the linear bundle decomposition $T(G/H)=\mathcal{V}_1+\mathcal{V}_2$
is orthogonal at each point with respect to the homogeneous Riemannian metric $\alpha$ on $G/H$.

Firstly, we prove that each $\mathcal{V}_i$ is integrable.
The first claim in Lemma \ref{lemma-2} provides the Lie subalgebra $\mathfrak{k}_i=\mathfrak{h}+\mathfrak{m}_i$. Let $K_i$ be the Lie subgroup
of $G$ generated by $\mathfrak{k}_i$. Then $H$ is a closed subgroup in $K_i$ and the submanifold $K_i/H$ is the integral submanifold of $\mathcal{V}_i$ passing $o$. Using the homogeneity, we see that $\mathcal{V}_i$ generates the smooth foliation $\{gK_i/H,\forall g\in G\}$ on $G/H$.

Secondly, we consider the restriction of $\alpha$ to $K_i/H$, which is the homogeneous metric determined by the Euclidean norm $\alpha_i|_{\mathfrak{m}_i}$. For simplicity, we denote $(K_i/H,\alpha_i)$ this homogeneous Riemannian submanifold.
Obviously $\mathfrak{k}_i=\mathfrak{h}+\mathfrak{m}_i$ is
a reductive decomposition for $K_i/H$, and by (3) in Lemma \ref{lemma-2}, $(K_i/H,\alpha_i)$ is
naturally reductive with respect to this decomposition.

Nextly, we prove that each submanifold $gK_i/H$ is a totally geodesic submanifold in $(G/H,\alpha)$.
Any vector $v\in\mathfrak{g}$ induces a Killing vector field
$V$ on $(G/H,\alpha)$. In particular, when the vector $v$ is from $\mathfrak{m}_i$,  $V$ is tangent to $K_i/H$ and $V(o)=v\in \mathfrak{m}=T_o(G/H)$.
For any Killing vector fields $X,Y$, corresponding to $u,v\in \mathfrak{m}_1$ and any $w\in\mathfrak{m}_2$ respectively,
we have
\begin{eqnarray}
\langle(\nabla^\alpha_XY)(o),w\rangle=\frac12
(\langle[w,u]_\mathfrak{m},v\rangle+
\langle[w,v]_\mathfrak{m},u\rangle-
\langle[u,v]_\mathfrak{m},w\rangle),\label{004}
\end{eqnarray}
in which $\nabla^\alpha$ is the Levi-Civita connection of $\alpha$.
By (1) and (2) in Lemma \ref{lemma-2}, each summand in the right side of (\ref{004}) vanishes. It implies that the second fundamental tensor for $K_1/H$ in $(G/H,\alpha)$ vanishes at $o$. Using the $G$-actions, we see the second fundamental tensors of all $gK_1/H$ vanish everywhere. For $gK_2/H$, the argument is similar.

To summarize, we have two integrable totally geodesic foliations $\mathcal{V}_1$ and $\mathcal{V}_2$, which are $\alpha$-complements of each other, at each point of $G/H$. So $(M,\alpha)$ is locally a Riemannian product between integral submanifolds of $\mathcal{V}_1$ and $\mathcal{V}_2$.

At last, we only need to prove the integral submanifold of $\mathcal{V}_i$ is naturally reductive. By the $G$-invariancy, we only need to consider $K_i/H$, i.e., the integral submanifold of $\mathcal{V}_i$ passing $o$. The restriction of $\alpha$ to
$K_i/H$ is the homogeneous Riemannian metric $\alpha_i$ determined by the Euclidean norm
$\alpha_i$ on $\mathfrak{m}_i$. Obviously, $\mathfrak{k}_i=\h+\m_i$ is a reductive decomposition for $K_i/H$ and by (3) in Lemma \ref{lemma-2},
$(K_i/H,\alpha_i)$ is naturally reductive for this decomposition.
\end{proof}

\begin{proof}[Proof of Theorem \ref{main-thm-1} when $H$ is not connected] Denote $H_0$ be the identity component of $H$. Then $\g=\h+\m$ is also a reductive decomposition for $G/H_0$ and $(G/H,F)$ is locally isometric to $(G/H_0,\tilde{F})$, in which the homogeneous metric $\tilde{F}$ is determined by the same $(\alpha_1,\alpha_2)$ norm on $\m$ as $F$. So $(G/H_0,\tilde{F})$ is also homogeneous $(\alpha_1,\alpha_2)$ manifold and it is naturally reductive with respect to the same decomposition as $F$. We have just proved $(G/H_0,\tilde{F})$ is locally isometric to
an $f$-product between two naturally reductive Riemannian manifolds. So the same claim is valid for $(G/H,F)$ immediately.
\end{proof}

For a connected and simple connected manifold, Theorem \ref{main-thm-1} can be strengthened as following.

\begin{Corollary}\label{cor-1}
A connected and simply connected non-Riemannian homogeneous $(\alpha_1,\alpha_2)$ manifold $(M,F)$ is naturally reductive if and only if it is the $f$-product between two connected and simply connected naturally reductive Riemannian manifolds $(M_i,\alpha_i)$ such that the function $L(u,v)$ in $F=\sqrt{L(\alpha_1^2,\alpha_2^2)}$ is nonlinear.
\end{Corollary}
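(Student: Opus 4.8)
\emph{Overview.} The statement is a biconditional whose two implications require somewhat different arguments: the forward one is essentially a globalization of Theorem \ref{main-thm-1}, while the reverse one is an explicit construction plus a spray-vector-field computation.

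\emph{Forward implication.} Here I would combine Theorem \ref{main-thm-1} with a standard globalization. The proof of that theorem equips the homogeneous Riemannian manifold $(M,\alpha)$ underlying $F$ with a pair of complementary, mutually $\alpha$-orthogonal, totally geodesic foliations $\mathcal{V}_1,\mathcal{V}_2$; two complementary orthogonal totally geodesic distributions are automatically parallel for $\nabla^\alpha$, and since $(M,\alpha)$ is homogeneous it is complete while $M$ is simply connected by hypothesis, so the de Rham decomposition theorem yields a \emph{global} Riemannian product $(M,\alpha)\cong(M_1,\alpha_1)\times(M_2,\alpha_2)$ with $TM_i=\mathcal{V}_i$ and $M_1,M_2$ complete and simply connected. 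Each leaf $M_i\times\{\mathrm{pt}\}$ is totally geodesic, and a complete connected totally geodesic submanifold is pinned down by its tangent space at a single point, so the leaf of $\mathcal{V}_i$ through the base point coincides with the naturally reductive Riemannian manifold $K_i/H$ produced in the proof of Theorem \ref{main-thm-1}; hence each $(M_i,\alpha_i)$ is naturally reductive. Once the splitting is in place, $F=\sqrt{L(\alpha_1^2,\alpha_2^2)}$ is by definition an $f$-product on $M_1\times M_2$, and $L$ must be nonlinear, since a linear $L$ (which by $1$-homogeneity means $L=bu+cv$) would make $F^2$ a quadratic form on every tangent space, i.e.\ $F$ Riemannian, contrary to hypothesis --- this is the content of the computation in Lemma \ref{lemma-1}.

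\emph{Reverse implication.} Suppose $M=M_1\times M_2$ with each $(M_i,\alpha_i)$ naturally reductive Riemannian and with $L$ nonlinear, so that $F=\sqrt{L(\alpha_1^2,\alpha_2^2)}$ is non-Riemannian. I would choose presentations $M_i=G_i/H_i$ with $G_i\subset I(M_i,\alpha_i)$ and reductive decompositions $\mathfrak{g}_i=\mathfrak{h}_i+\mathfrak{m}_i$ realizing natural reductiveness, and put $G=G_1\times G_2$, $H=H_1\times H_2$, $M=G/H$, with the product reductive decomposition $\mathfrak{g}=\mathfrak{h}+\mathfrak{m}$, $\mathfrak{h}=\mathfrak{h}_1+\mathfrak{h}_2$, $\mathfrak{m}=\mathfrak{m}_1+\mathfrak{m}_2$. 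Since each $G_i$ preserves $\alpha_i$ and respects the product structure, $G\subset I(M,F)$. To verify natural reductiveness I would use the spray-vector-field criterion: because $[\mathfrak{m}_1,\mathfrak{m}_2]=0$ and $[\mathfrak{m}_i,\mathfrak{m}_i]\subset\mathfrak{h}_i+\mathfrak{m}_i$, one has $[u,y]_{\mathfrak{m}}=[u_1,y_1]_{\mathfrak{m}_1}+[u_2,y_2]_{\mathfrak{m}_2}$ for $y=y_1+y_2$, $u=u_1+u_2$ in $\mathfrak{m}$; substituting into $g_y(y,[u,y]_{\mathfrak{m}})$ and using the block form of the fundamental tensor of an $(\alpha_1,\alpha_2)$ norm recalled in the proof of Lemma \ref{lemma-2} together with the relations $\langle y_i,[y_i,u_i]_{\mathfrak{m}_i}\rangle=0$ coming from the natural reductiveness of each $\alpha_i$, one gets $g_y(y,[u,y]_{\mathfrak{m}})=0$ for all $u$, i.e.\ the spray vector field vanishes identically, so $F$ is naturally reductive. (Alternatively, one may invoke Definition \ref{definition1}: an $f$-product is Berwald and its affine connection is the Levi-Civita connection of the product Riemannian metric $\alpha$ with $\alpha^2=\alpha_1^2+\alpha_2^2$, which is visibly naturally reductive for the data above; this also matches Szab\'{o}'s description cited after Theorem \ref{main-thm-1}.)

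\emph{Expected obstacle.} The delicate part is the bookkeeping in the forward implication: matching the de Rham factors $M_i$ with the leaves $K_i/H$ produced by Theorem \ref{main-thm-1} (including the reduction to $G/H_0$ when $H$ is disconnected) and checking that the data $(\alpha,\mathcal{V}_1,\mathcal{V}_2,L)$ representing $F$ is compatible with the Riemannian splitting. By contrast, the Finsler-specific step in the reverse implication looks more intimidating because of the off-diagonal term $g_{1n}=2abL_{12}$ in the fundamental tensor, but that term is annihilated as soon as the relations $\langle y_i,[y_i,\,\cdot\,]_{\mathfrak{m}_i}\rangle=0$ are inserted, so it should go through routinely.
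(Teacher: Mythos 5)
Your proposal is correct and takes essentially the same route as the paper: the forward direction globalizes the local product from the proof of Theorem \ref{main-thm-1} (the paper asserts this globalization tersely, while you spell out the de Rham argument), and the reverse direction builds the product presentation $M=(G_1\times G_2)/(H_1\times H_2)$ with $\m=\m_1+\m_2$ and checks that the spray vector field vanishes, which is exactly what the paper does via the vanishing structure constants (\ref{013}) and the computation (\ref{001}). No gaps.
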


%Its proof can be sketched as following, with some easy chores skipped.

\begin{proof} Suppose that $(M,F)$ is a naturally reductive
$(\alpha_1,\alpha_2)$ manifold.
When $(M,F)$ is connected and simply connected, the local $\alpha$-orthogonal product in the proof of Theorem \ref{main-thm-1} is global. So we have a Riemannian product decomposition $(M,\alpha)=(M_1,\alpha_1)\times (M_2,\alpha_2)$, in which each $(M_i,\alpha_i)$ is a simply connected naturally reductive Riemannian manifold, and $F=\sqrt{L(\alpha_1^2,\alpha_2^2)}$ is an $f$-product metric. Since $F$ is non-Riemannian, the function $L(u,v)$ is nonlinear.

Suppose that we have the Riemannian product decomposition $(M,\alpha)=(M_1,\alpha_1)\times (M_2,\alpha_2)$, in which each $(M_i,\alpha_i)$ is naturally reductive with respect to the representation $M_i=G_i/H_i$ and the reductive decomposition $\mathfrak{g}_i=\mathfrak{h}_i+\mathfrak{m}_i$. Then we claim that $F=\sqrt{L(\alpha_1^2,\alpha_2^2)}$ is naturally reductive for the presentation $M=G/H$ and the reductive decomposition $\g=\h+\m$, where $G=G_1\times G_2$, $H=H_1\times H_2$ and $\m=\m_1+\m_2$.
For any nonzero $y=y_1+y_2$ with $y_i\in\mathfrak{m}_i$, we can find an $\alpha_1$-orthonormal basis $\{e_1,\cdots,e_{n_1}\}$ for $\mathfrak{m}_1$ such that $y_1\in\mathbb{R}e_1$ and
an $\alpha_2$-orthogonal basis $\{e_{n_1+1},\cdots,e_n\}$ for $\mathfrak{m}_2$ such that  $y_2\in\mathbb{R}e_n$. Denote $c_{ij}^k$ the bracket coefficients in $[e_i,e_j]_\mathfrak{m}=c_{ij}^k e_k$. By the
naturally reductiveness of each $(G_i/H_i,\alpha_i)$ and the Lie algebra direct sum decomposition $\mathfrak{g}=\mathfrak{g}_1\oplus\mathfrak{g}_2$, we can easily see
\begin{equation}\label{013}
c_{1i}^1=c_{ni}^n=c_{ni}^1=c_{1i}^n=0,\quad\forall i.
\end{equation}
The calculation (\ref{001}) provides
$\langle y,[y,e_i]_\mathfrak{m}\rangle_y=0$, $\forall y$, $\forall i$, So $F=\sqrt{L(\alpha_1^2,\alpha_2^2)}$ is naturally reductive.
\end{proof}

There are many examples of naturally reductive $(\alpha_1,\alpha_2)$ manifolds. For example, when each $G_i/H_i$ is a normal homogeneous Riemannian manifold, the $f$-product metrics on $G_1/H_1\times G_2/H_2$ is naturally reductive. Most of these metrics are not Riemannian, locally Minkowski, or symmetric.

We also have the following byproduct.

\begin{Corollary}\label{cor-2}
Suppose the homogeneous non-Riemannian $(\alpha_1,\alpha_2)$ metric $F=\sqrt{L(\alpha_1^2,\alpha_2^2)}$ is naturally reductive
on $G/H$ with respect to the reductive decomposition
$\g=\h+\m$. Then $\alpha$ is also naturally reductive for the same decomposition.
\end{Corollary}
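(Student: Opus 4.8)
The plan is to verify the Lie-algebraic criterion \eqref{012} for $\alpha$ directly, feeding in only the structural information already extracted in Lemma \ref{lemma-2}. Concretely, writing $\langle\cdot,\cdot\rangle=\alpha^2(\cdot)$, I want to show $\langle y,[y,x]_\m\rangle=0$ for all $x,y\in\m$. This is exactly the assertion that $\alpha$ is naturally reductive for $\g=\h+\m$: note first that $\alpha$ restricts to an $\mathrm{Ad}(H)$-invariant Euclidean norm on $\m$, so it induces a $G$-invariant Riemannian metric on $G/H$ with $G\subset I(G/H,\alpha)$, and $\g=\h+\m$ is a reductive decomposition for it (it was one for $F$, with the same $\h$ and $\m$); thus the ambient setup of the Riemannian definition is in place, and only \eqref{012} remains.

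First I would decompose $x=x_1+x_2$ and $y=y_1+y_2$ according to $\m=\m_1+\m_2$ and expand $[y,x]$ by bilinearity into the four brackets $[y_1,x_1]$, $[y_1,x_2]$, $[y_2,x_1]$, $[y_2,x_2]$. By part (2) of Lemma \ref{lemma-2} the two mixed brackets lie in $\h$, so they contribute nothing to $[y,x]_\m$; by part (1) of Lemma \ref{lemma-2} the brackets $[y_i,x_i]$ lie in $\h+\m_i$, and (as noted in the proof of that lemma) their $\m$-components coincide with $[y_i,x_i]_{\m_i}\in\m_i$. Hence $[y,x]_\m=[y_1,x_1]_{\m_1}+[y_2,x_2]_{\m_2}$.

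Then, since $\m=\m_1+\m_2$ is $\alpha$-orthogonal, pairing with $y=y_1+y_2$ collapses the cross terms and gives $\langle y,[y,x]_\m\rangle=\langle y_1,[y_1,x_1]_{\m_1}\rangle+\langle y_2,[y_2,x_2]_{\m_2}\rangle$, and each summand vanishes by part (3) of Lemma \ref{lemma-2} applied with second argument $x_i\in\m_i$. This establishes \eqref{012} for $\alpha$ and completes the proof. There is no genuinely hard step here: all the work has already been done in Lemma \ref{lemma-2}, and the only points deserving a word of care are that the natural reductiveness of $\alpha$ is a purely algebraic condition on $(\g,\h,\m)$ — so, as in the non-connected case of Theorem \ref{main-thm-1}, the connectedness of $H$ is irrelevant — and that $\alpha$ really is a homogeneous Riemannian metric on the \emph{same} $G/H$, which is immediate from the $\mathrm{Ad}(H)$-invariance of the Euclidean norm $\alpha$ on $\m$.
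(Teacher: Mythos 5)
Your proposal is correct and follows essentially the same route as the paper: both arguments verify condition (\ref{012}) for $\alpha$ directly from the bracket structure established by Lemma \ref{lemma-2}. The only cosmetic difference is that the paper re-invokes the coefficient identities (\ref{013}) from the lemma's proof in an adapted basis, whereas you use the lemma's stated conclusions (1)--(3) in a coordinate-free way; the computation is the same.
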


\begin{proof} Let $y=y_1+y_2$
be any nonzero vector in $\m$ with $y_i\in\m_i$. We choose the $\alpha$-orthonormal basis $\{e_1,\cdots,e_{n_1}\}$ for $\m_1$ with $y_1\in\mathbb{R}e_1$ and the $\alpha$-orthonormal basis $\{e_{n_1+1},\cdots,e_n\}$ for $\m_2$ with $y_2\in\mathbb{R}e_n$. By the argument in the proof of Lemma \ref{lemma-2},  we see the bracket coefficients
$c^k_{ij}$ in $[e_i,e_j]_\m=c^k_{ij}e_k$ still satisfy (\ref{013}).
Then direct calculation shows $\langle y,[y,\m]_\m\rangle=0$
for the inner product $\langle\cdot,\cdot\rangle=\alpha^2(\cdot)$ on $\mathfrak{m}$. So $\alpha$ is naturally reductive.
\end{proof}

\begin{remark} The natural reductiveness of a non-Riemannian $(\alpha_1,\alpha_2)$ metric $F$ on $G/H$ does not imply local product decompositions $G=G_1\times G_2$ and $H=H_1\times H_2$ such that $G_1/H_1$ and $G_2/H_2$ are the two factors in the local $f$-product representation for $F$.
Here is an example.
\end{remark}

\begin{exam}\label{example}
Let $(M_i,\alpha_i)$  be a standard Euclidean plane for $i=1,2$. Its connected isometry group $G_i$ is the semi product between $H_i=SO(2)$ for rotations and $\mathbb{R}^2$ for parallel translations. Correspondingly, we have a canonical reductive decomposition $\mathfrak{g}_i=\mathfrak{h}_i+\mathfrak{m}_i$. Let $H$ be a diagonal $SO(2)$ in $H_1\times H_2$ and $G$ the Lie subgroup of $G_1\times G_2$ with Lie algebra $\g=\h+\m_1+\m_2$. Then an $f$-product metric $F=\sqrt{L(\alpha_1^2,\alpha_2^2)}$ on $M_1\times M_2=G/H$ is naturally reductive with respect to the reductive decomposition $\g=\h+\m$ with $\m=\m_1+\m_2$. But $G$ is not locally a product of two Lie subgroups.
\end{exam}

\section{Curvatures of homogeneous $(\alpha_1,\alpha_2)$ manifolds}

\subsection{Proof of Theorem \ref{main-thm-2}} \label{prepareforcls}

The S-curvature curvature formula of a homogeneous manifold is the following (see Theorem 4.2 in
\cite{XD2015} and Proposition 4.6 in \cite{Hu2015}).

\begin{theorem}\label{preliminary-theorem}
Let $G/H$ be a smooth coset space with a
reductive decomposition $\g=\h+\m$. Then for any homogeneous Finsler $F$ and an invariant smooth
measure, the S-curvature $S(o,\cdot):\m\backslash0=T_o(G/H)
\backslash0\rightarrow\mathbb{R}$ is determined by
\begin{equation*}
S(o,y)=\langle y,\nabla^{g_{ij}}\ln\sqrt{\det(g_{pq})}(y)\rangle_y=I_y(\eta(y)),
\end{equation*}
where $\eta$ is the spray vector field and $I_y$ is the mean Cartan tensor.
\end{theorem}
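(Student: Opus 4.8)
The plan is to derive the identity from the intrinsic description of the S-curvature as the derivative of the distortion $\tau(x,y)=\ln\sqrt{\det(g_{pq}(x,y))}-\ln\sigma(x)$ along the geodesic spray, that is $S=\mathbf{G}(\tau)=y^i\partial_{x^i}\tau-2\mathbf{G}^i\partial_{y^i}\tau$, which is just a rewriting of (\ref{010}). Two facts do the work. The first is the elementary ``trace of the Cartan tensor'' identity $\partial_{y^k}\ln\sqrt{\det(g_{pq})}=\tfrac12 g^{pq}\partial_{y^k}g_{pq}=g^{pq}C_{pqk}=I_k$; so the vertical part of $\mathbf{G}(\tau)$ automatically feeds the mean Cartan tensor, the $\sigma$-term disappears from it, and the spray-derivative of $\tau$ is, up to the $\ln\sigma$ contribution, the horizontal derivative of $\ln\sqrt{\det(g_{pq})}$ in the direction $y$ — which is the expression $\langle y,\nabla^{g_{ij}}\ln\sqrt{\det(g_{pq})}(y)\rangle_y$ in the statement. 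The second fact is that $F$ and the invariant measure are homogeneous, so $\tau$ is an isometry invariant; this is what will convert the horizontal derivative $y^i\partial_{x^i}\tau|_o$ into algebraic data on $\m$.

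First I would exploit the invariance. For every $v\in\g$ the flow $\exp(tv)$ is a measure-preserving isometry, hence preserves $\tau$, so $\tau$ is annihilated by the complete lift $\widetilde V=V^k\partial_{x^k}+(\partial_{x^k}V^j)y^k\partial_{y^j}$ of the Killing field $V$ to $TM\backslash\{0\}$. Evaluating $\widetilde V(\tau)=0$ at $(o,y)$ for $v=y$ (so $V(o)=y$) and using $\partial_{y^j}\tau=I_j$ gives $y^i\partial_{x^i}\tau|_o=-(y^k\partial_{x^k}Y^j(o))\,I_j(o,y)$, where $Y$ is the Killing field of $y$. Since the orbit $\gamma(t)=\exp(ty)\cdot o$ has velocity $Y|_\gamma$, one has $\ddot{\gamma}^j(0)=y^k\partial_{x^k}Y^j(o)$, so inserting this into $S(o,y)=y^i\partial_{x^i}\tau|_o-2\mathbf{G}^i(o,y)I_i(o,y)$ collapses everything to
\[
S(o,y)=-\big(\ddot{\gamma}^i(0)+2\mathbf{G}^i(o,y)\big)I_i(o,y)=-\big(\nabla^F_{\dot{\gamma}}\dot{\gamma}\big)^i(0)\,I_i(o,y),
\]
that is, $S(o,y)$ is minus the covariant (Chern) acceleration at $o$ of the one-parameter-subgroup orbit, contracted with the mean Cartan covector $I_y$.

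Finally I would identify the orbit's covariant acceleration $\big(\nabla^F_{\dot{\gamma}}\dot{\gamma}\big)(0)$ with $-\eta(y)$. Using $[\h,\m]\subset\m$ and the $\mathrm{Ad}(H)$-invariance of the fundamental tensor, the Chern connection data at $o$ with reference vector $y$ can be written through $[\cdot,\cdot]_\m$, and one reads off $g_y\big((\nabla^F_{\dot{\gamma}}\dot{\gamma})(0),u\big)=-g_y(y,[u,y]_\m)$ for every $u\in\m$, which is exactly the defining equation of the spray vector field $\eta$. Hence $(\nabla^F_{\dot{\gamma}}\dot{\gamma})(0)=-\eta(y)$, and substituting yields $S(o,y)=\eta^i(y)I_i(o,y)=I_y(\eta(y))$, giving the stated formula. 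As a sanity check, natural reductiveness --- equivalently $\eta\equiv0$ --- then forces $S\equiv0$, consistent with the Berwald property of naturally reductive metrics. The hard part is precisely this last identification: extracting the orbit's covariant acceleration from the reductive data with the correct sign and normalization, because a priori the spray at $o$ entangles the bracket $[\cdot,\cdot]_\m$ with the isotropy representation on $\m$, and one must check that the $\h$-terms and the vertical ``Cartan'' corrections in the Chern connection reorganise precisely into $-\eta(y)$. This is in essence the content of the fundamental equations of homogeneous Finsler spaces in \cite{Hu2015} (compare also \cite{DH2010} and \cite{XD2015}).
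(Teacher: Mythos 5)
Your strategy is the right one, and it is essentially the Killing-frame argument behind the sources the paper cites for this statement (the paper itself gives no proof, only the references to Proposition 4.6 of \cite{Hu2015} and Theorem 4.2 of \cite{XD2015}): the reduction via $S=\mathbf{G}(\tau)$, the identity $\partial_{y^k}\ln\sqrt{\det(g_{pq})}=I_k$, and the vanishing of $\tau$ under the complete lift of the Killing field of $y$ are all correct, and they do reduce the theorem to the one identity you state at the end, namely the value of the covariant acceleration (Chern connection, reference vector $\dot\gamma$) of the orbit $\gamma(t)=\exp(ty)\cdot o$. But that identity is precisely what you do not prove --- you ``read it off'' --- and as asserted it has the wrong sign, which is fatal for a statement whose entire content is an exact formula. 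With the conventions written in Section 2 of this paper ($V(x)=\frac{d}{dt}|_{t=0}\exp(tv)\cdot x$, the identification $\m\cong T_o(G/H)$ via $v\mapsto V(o)$, and $g_y(\eta(y),u)=g_y(y,[u,y]_\m)$), one has $g_y\bigl((D_{\dot\gamma}\dot\gamma)(0),u\bigr)=+g_y(y,[u,y]_\m)$, i.e. $(D_{\dot\gamma}\dot\gamma)(0)=+\eta(y)$, not $-\eta(y)$. Quickest verification: in the chart $x\mapsto\exp(x^ie_i)\cdot o$, the isometry $\exp(tu)$ gives $F^2(tu,y)=F^2\bigl(o,\,y-\tfrac{t}{2}[u,y]_\m+O(t^2)\bigr)$, hence $u^k[F^2]_{x^k}(o,y)=-g_y(y,[u,y]_\m)$ and $y^k[F^2]_{x^ky^l}(o,y)=g_y(y,[e_l,y]_\m)$, so $2\mathbf{G}^i(o,y)=\eta^i(y)$, while the orbit is the straight line $x(t)=ty$; already in the Riemannian case the Killing-field computation $\langle\nabla_VV(o),w\rangle=\langle[w,y]_\m,y\rangle$ (test it on $t\mapsto\exp(te_2)\cdot o$, a horocycle, in the hyperbolic plane realized on the affine group with $[e_1,e_2]=e_2$) gives the same plus sign.

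Consequently, if you carry out your own chain with the correct sign you arrive at $S(o,y)=-I_y(\eta(y))$ for $\eta$ as defined in Section 2; to land on the displayed formula $S(o,y)=I_y(\eta(y))$ one must instead define $\eta$ by $g_y(\eta(y),u)=g_y(y,[y,u]_\m)$ (the bracket order varies between sources, and the two choices differ exactly by this sign). In other words, your claimed $-\eta(y)$ for the acceleration and the implicit convention for $\eta$ cancel only typographically; as a proof, the decisive step is both unjustified and mis-signed, and you must either prove the acceleration identity honestly (this is Huang's spray computation; the exponential-chart calculation sketched above is a complete substitute) and then state explicitly which bracket order defines $\eta$, or the final equality does not follow. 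Two smaller points: say explicitly that your covariant derivative is the Chern one with reference vector $\dot\gamma$, since otherwise $(\nabla^F_{\dot\gamma}\dot\gamma)$ is ambiguous; and note that the first expression in the theorem, read literally, equals $y^iI_i=0$, so the substance of the statement is the second equality --- which makes getting its sign and convention right unavoidable rather than a detail to be waved through.
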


Using Theorem \ref{preliminary-theorem}, we can prove

\begin{lemma}\label{lemma-3}
Let $G/H$ be a smooth coset space with a reductive decomposition $\g=\h+\m$, and the decomposition $\m=\m_1+\m_2$ is used for defining the homogeneous $(\alpha_1,\alpha_2)$ metric $F$ on $G/H$. Then for any nonzero vector $y\in\m_1\cup\m_2\subset\m=T_o(G/H)$, the S-curvature $S(o,y)$ vanishes.
\end{lemma}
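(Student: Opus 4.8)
The plan is to exploit the formula $S(o,y)=I_y(\eta(y))$ from Theorem \ref{preliminary-theorem}, and to show that for $y\in\m_1\cup\m_2$ the spray vector $\eta(y)$ lies in a subspace on which the mean Cartan tensor $I_y$ vanishes. First I would reduce to the case $y\in\m_1\setminus\{0\}$, the case $y\in\m_2\setminus\{0\}$ being symmetric. Fixing such a $y$, choose an $\alpha$-orthonormal basis $\{e_1,\dots,e_{n_1}\}$ of $\m_1$ with $y=ae_1$ and an $\alpha$-orthonormal basis $\{e_{n_1+1},\dots,e_n\}$ of $\m_2$. The defining relation $g_y(\eta(y),u)=g_y(y,[u,y]_\m)$ for all $u\in\m$ expresses $\eta(y)$ through the fundamental tensor of $F$ at $y$ and the bracket coefficients $c^k_{ij}$; since $y=ae_1$ has vanishing $\m_2$-component, the computation is exactly the specialization $b=0$ of the calculation \eqref{001}, and the fundamental tensor at $y=ae_1$ is diagonal in this basis with the block structure recorded in the proof of Lemma \ref{lemma-2} ($g_{11}=L_1+2a^2L_{11}$, $g_{ii}=L_1$ for $2\le i\le n_1$, $g_{ii}=L_2$ for $i>n_1$).

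The key step is to determine which components of $\eta(y)$ can be nonzero. Pairing $g_y(\eta(y),\cdot)$ against each basis vector and using the diagonal form of $g_y$ at $y=ae_1$, one finds that $\eta(y)$ is a combination of the $e_i$ with $i>n_1$, i.e.\ $\eta(y)\in\m_2$, with coefficients proportional to $L_2\, c^?_{??}$-type brackets; the $\m_1$-components drop out because $g_y(y,[e_1,y]_\m)$ and $g_y(y,[e_j,y]_\m)$ for $2\le j\le n_1$ reduce to multiples of $\langle y,[y,e_j]_{\m_1}\rangle$ and these need not vanish in general — but what matters is only that $\eta(y)\perp\m_1$ in the sense that its nonzero contributions lie in $\m_2$. (More precisely: the $e_1$-component of $\eta(y)$ is controlled by $g_y(y,[e_1,y]_\m)=0$, and the $e_j$-component for $2\le j\le n_1$ is controlled by $g_{jj}^{-1}g_y(y,[e_j,y]_\m)$, which is a multiple of $c^1_{j1}$ contracted with $g_{11}$-terms that vanish by the antisymmetry $\langle y,[y,y]\rangle=0$; in any case $\eta(y)$ has no component along $e_1$.) Then I invoke the structure of the mean Cartan tensor $I_y$ of an $(\alpha_1,\alpha_2)$ norm at the point $y=ae_1\in\m_1$: by the $O(n_1)\times O(n_2)$ symmetry of $F$, $I_y$ is proportional to $dF$, hence $I_y$ annihilates every vector $g_y$-orthogonal to $y$; since $\eta(y)$ has no $e_1$-component and the fundamental tensor is diagonal at $y$, $\eta(y)$ is $g_y$-orthogonal to $y$, so $I_y(\eta(y))=0$, giving $S(o,y)=0$.

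The main obstacle I anticipate is the bookkeeping of the mean Cartan tensor: I need the clean fact that for an $(\alpha_1,\alpha_2)$ norm the mean Cartan tensor at a vector lying entirely in one of the two distributions is radial, i.e.\ a multiple of $dF$. This should follow from differentiating $\ln\sqrt{\det(g_{pq})}$ using the explicit fundamental tensor formulas displayed in the proof of Lemma \ref{lemma-2} (the determinant, as a function of $y$, depends only on $\alpha_1^2$ and $\alpha_2^2$, and at $y\in\m_1$ its gradient is forced by symmetry to point along $y$), but it warrants a careful check rather than a one-line assertion. An alternative, possibly cleaner, route to the same conclusion: show directly that $\eta(y)=0$ when $y\in\m_1\cup\m_2$ — this would follow if $g_y(y,[u,y]_\m)=0$ for all $u\in\m$ whenever $y\in\m_1$, which one can try to extract from the $b=0$ specialization of \eqref{001} together with Lemma \ref{lemma-2}(3); if that identity holds, then $S(o,y)=I_y(0)=0$ immediately and the Cartan-tensor discussion is unnecessary. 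I would first attempt this direct vanishing of $\eta(y)$, and fall back on the radiality argument only if $\eta(y)$ turns out to be merely $g_y$-orthogonal to $y$ rather than zero.
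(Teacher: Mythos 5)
Your proposal is correct, but it reaches the conclusion by a slightly different (and more roundabout) mechanism than the paper. The paper's proof ignores $\eta(y)$ entirely: by the $O(n_1)\times O(n_2)$-symmetry of an $(\alpha_1,\alpha_2)$ norm, the full Cartan tensor vanishes at every nonzero $y\in\m_1\cup\m_2$ (this is quoted from Lemma 2.2 of \cite{XM2021}; at $y=ae_1$ the stabilizer $O(n_1-1)\times O(n_2)$ forces $C_y$ to be a combination of terms of the form $C_y(e_1,\cdot,\cdot)$, which die by $C_y(y,\cdot,\cdot)=0$), so $I_y\equiv0$ and $S(o,y)=I_y(\eta(y))=0$ with no information about $\eta$ needed. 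Your route instead proves $I_y\propto dF|_y$ and $g_y(\eta(y),y)=0$; both steps are valid, but note three things. First, the orthogonality is immediate: put $u=y$ in $g_y(\eta(y),u)=g_y(y,[u,y]_\m)$ and use $[y,y]=0$, so the basis bookkeeping is unnecessary, and your stronger passing claim that $\eta(y)\in\m_2$ is not justified (for a general homogeneous $(\alpha_1,\alpha_2)$ metric the components $g_y(y,[e_j,y]_\m)$, $2\le j\le n_1$, need not vanish) — fortunately you only use the weaker, correct statement. Second, your preferred fallback of proving $\eta(y)=0$ outright would fail: Lemma \ref{lemma-3} assumes no natural reductiveness, and $\eta$ need not vanish on $\m_1\cup\m_2$ for a general homogeneous metric, so the ``radiality'' branch is the one that actually carries the proof. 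Third, your radiality claim already collapses to the paper's: since $I_y(y)=0$ always (by $C_y(y,\cdot,\cdot)=0$) while $dF|_y(y)=F(y)>0$, the relation $I_y=c\,dF|_y$ forces $c=0$, i.e. $I_y\equiv0$ at such $y$ — which is exactly the symmetry-based vanishing the paper invokes, obtained without discussing $\eta(y)$ at all.
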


\begin{proof}
Since $F$ is a homogeneous $(\alpha_1,\alpha_2)$ metric, the corresponding
Minkowski norm $F=F(o,\cdot)$ on $\m$ is invariant for
a standard block-diagonal action of $O(n_1)\times O(n_2)$, such that $\m_1$ and $\m_2$ are the fixed point set of the $O(n_2)$ and $O(n_1)$-actions respectively. Because of this symmetry, the Cartan tensor vanishes on $(\m_1\cup\m_2)\backslash\{0\}$ (the detailed argument is contained in the proof of Lemma 2.2 in \cite{XM2021}). So for any nonzero $y\in\m_1\cup\m_2$, $I_y(\cdot)\equiv0$, and then by Theorem \ref{preliminaries}, $S(o,y)=I_y(\eta(y))=0$.
\end{proof}

Now we prove Theorem \ref{main-thm-2}.

\begin{proof}[Proof of Theorem \ref{main-thm-2}]
The equality (3.19) in \cite{CS2012} points out the following basic fact,
$$E_{ij}=\frac12 S_{y^iy^j}=\frac12\frac{\partial^3 \mathbf{G}^k}{\partial y^i\partial y^j\partial y^k},$$
from which we see
\begin{eqnarray}\label{009}
S=(n+1)c(x)F+\epsilon \Longleftrightarrow E=\frac{n+1}{2}c(x)F^{-1}h,
\end{eqnarray}
i.e., a Finsler metric has weak isotropic S-curvature if and only if it has isotropic E-curvature.
This observation proves (2)$\Rightarrow$(4) and (3)$ \Rightarrow$(1) immediately. The statement (4)$\Rightarrow$(3) is trivial. So we only need to prove
(1)$\Rightarrow$(2).

Let $(G/H,F)$ be a homogeneous $(\alpha_1,\alpha_2)$ manifold with weakly isotropic S-curvature. Suppose $\g=\h+\m$ a reductive decomposition for $G/H$ and the decomposition $\m=\m_1+\m_2$ is used when defining $F$.
Since $F$ is reversible, so its S-curvature $S(x,y)$ is odd for the $y$-entry, i.e.,
$S(x,y)+S(x,-y)\equiv0$. Then the property of weakly isotropic S-curvature, $S(x,y)=(n+1)c(x)F(x,y)+\epsilon(x,y)$, for some function $c(x)$ and one-form $\epsilon(x,y)$, provides
$$S(x,y)+S(x,-y)=(n+1)c(x)(F(x,y)+F(x,-y)=2(n+1)c(x)F(x,y)=0,$$
i.e., $c(x)\equiv0$ and $S(x,y)=\epsilon(x,y)$. By Lemma \ref{lemma-3}, $S(o,\cdot)$ vanishes on
$(\m_1\cup\m_2)\backslash\{0\}$. Then by the linearity of $S(o,\cdot)=\epsilon(o,\cdot)$, $S(o,\cdot)$ vanishes at any nonzero $y\in\m=\m_1+\m_2$. Using the homogeneity, we see $(G/H,F)$ has vanishing S-curvature.
\end{proof}

\begin{remark}
Theorem 4.2 in \cite{DX2016} provides an
explicit S-curvature formula for a homogeneous $(\alpha_1,\alpha_2)$ metric. We may use it to observe
directly that the S-curvature $S(o,\cdot)$ is odd and vanishes
on $(\m_1\cup\m_2)\backslash0$. Theorem 4.3 in \cite{DX2016} provides the algebraic characterization of the vanishing S-curvature property for a homogeneous $(\alpha_1,\alpha_2)$ metric, i.e., a homogeneous $(\alpha_1,\alpha_2)$ metric $F$ on $G/H$ with a reductive decomposition $\g=\h+\m$
has vanishing S-curvature if and only if
\begin{equation}\label{014}
\langle [y_1,\m_2]_\m,y_1\rangle=
\langle y_2,[y_2,\m_1]_\m\rangle=0,\quad\forall y_1\in\m_1,y_2\in\m_2,
\end{equation}
where the inner product $\langle\cdot,\cdot\rangle=\alpha^2(\cdot)$ and
the decomposition $\m=\m_1+\m_2$ are used for defining
$F$. By Theorem \ref{main-thm-2}, (\ref{014}) also characterize the properties of vanishing E-curvature, isotropic E-curvature, etc.
\end{remark}

\subsection{ Flag curvature of a naturally reductive $(\alpha_1,\alpha_2)$ manifold}
In this section, we calculate the flag curvature of a naturally reductive $(\alpha_1, \alpha_2)$ metric. Let $F$ be a naturally reductive $(\alpha_1, \alpha_2)$ metric on $G/H$ with
respect to the reductive decomposition $\mathfrak{g}=\mathfrak{h}+\mathfrak{m}$. We present $F$ as $F(y)=|y|\phi(\tfrac{|y_2|}{|y|})$. Here $|\cdot|=\langle\cdot,\cdot\rangle^{1/2}$ is an $\mathrm{Ad}(H)$-invariant Euclidean norm on $\m$. For any $y\in\m$, we have $y=y_1+y_2$ with $y_i\in\m_i$ for an $\mathrm{Ad}(H)$-invariant $\langle\cdot,\cdot\rangle$-orthogonal decomposition $\m=\m_1+\m_2$.

\begin{theorem}\label{main-thm-3}
Keep all assumptions and notations in this section, and let $(o,y,\mathbf{P})$ be a
the flag triple such that $\{y,x\}$ is a $\langle\cdot,\cdot\rangle$-orthonormal basis of $P\subset\m$.
Then the flag curvature of $(o,y,\mathbf{P})$ is
\begin{eqnarray*}
% \nonumber to remove numbering (before each equation)
K(o,y,\mathbf{P}) &=& \frac{[\phi^{2}(|y_2|)-|y_2|\cdot\phi(|y_2|)\phi^{'}(|y_2|)]N+ \frac{1}{|y_2|}\phi(|y_2|)\phi^{'}(|y_2|)Q}{\phi^3(|y_2|)[\phi(|y_2|)+ \phi^{'}(|y_2|)M+\frac{\langle x_2, y_2\rangle^2}{\langle y_2, y_2\rangle}\phi^{''}(|y_2|)]},
\end{eqnarray*}
where
\begin{eqnarray*}
% \nonumber to remove numbering (before each equation)
  M &=& \frac{\langle x_2, x_2\rangle}{|y_2|}-\frac{\langle x_2, y_2\rangle^2}{|y_2|^3}-|y_2|,\\
  N &=& \frac{1}{4}\langle [x,y]_{\m}, [x,y]_{\m}\rangle+\langle[[x,y]_{\h}, x], y\rangle, \\
  Q &=& \frac{1}{4}\langle [x_2,y]_{\m}, [x,y]_{\m}\rangle+\langle[[x,y]_{\h}, x_2], y\rangle,
\end{eqnarray*}
 and $x_2, y_2$ denote the components of $x, y$ in $\m_2$ respectively,
\end{theorem}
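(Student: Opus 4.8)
The strategy is to exploit the two features that make a naturally reductive $(\alpha_1,\alpha_2)$ metric tractable: the spray vector field $\eta$ vanishes identically, and the ambient Euclidean structure $\langle\cdot,\cdot\rangle$ is the Riemannian product of $\langle\cdot,\cdot\rangle|_{\m_1}$ and $\langle\cdot,\cdot\rangle|_{\m_2}$ with $[\m_1,\m_2]\subset\h$ and each $\mathfrak{k}_i=\h+\m_i$ a subalgebra (Lemma \ref{lemma-2}). First I would invoke L. Huang's homogeneous flag curvature formula for the case $\eta\equiv0$ \cite{Hu2015}: when the spray vector field is constantly zero, the Riemann curvature operator $R_y$ acting on $\m=T_o(G/H)$ reduces to an expression built only from the bracket $[\cdot,\cdot]_\m$, the $\h$-part $[\cdot,\cdot]_\h$, and the fundamental tensor $g_y$, with none of the derivative-of-$\eta$ terms that plague the general case. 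Concretely one gets $g_y(R_y(u),u)$ expressed through quantities of the shape $\tfrac14 g_y([u,y]_\m,[u,y]_\m)$ and $g_y([[u,y]_\h,u],y)$, plus correction terms measuring the failure of $g_y$ to be bi-invariant. Since a naturally reductive Finsler metric is Berwald, the connection is that of the Riemannian metric $\alpha=\langle\cdot,\cdot\rangle^{1/2}$, so the affine/projective curvature data is Riemannian and the only Finslerian ingredient is how $g_y$ reweights directions.

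**Key steps in order.** (1) Write down $g_y$ explicitly for $F=|y|\phi(|y_2|/|y|)$: as in the proof of Lemma \ref{lemma-2}, with respect to a $\langle\cdot,\cdot\rangle$-orthonormal adapted basis the fundamental tensor is block-diagonal — identity on $\m_1$ up to the scalar coming from $\phi$, and on $\m_2$ a rank-one perturbation of a scalar matrix in the $y_2$-direction. This is where the functions $M$ and the denominator factor $\phi+\phi'M+\tfrac{\langle x_2,y_2\rangle^2}{\langle y_2,y_2\rangle}\phi''$ will be produced: $M$ is precisely $g_y(x,x)$ after subtracting the Euclidean part, and the denominator is $g_y(y,y)g_y(x,x)-g_y(x,y)^2$ divided by the appropriate power of $\phi$. (2) Using $\eta\equiv0$ and the Berwald property, substitute into Huang's formula to get $g_y(R_y(x),x)$ as a $g_y$-weighted combination of $\tfrac14\langle[x,y]_\m,[x,y]_\m\rangle$-type and $\langle[[x,y]_\h,x],y\rangle$-type terms; because $g_y$ splits along $\m_1\oplus\m_2$ and acts as a scalar on $\m_1$, the $\m_1$-components of $[x,y]_\m$ contribute with weight $\propto\phi^2$ while the $\m_2$-components pick up the extra $\phi\phi'$ weighting — this is the origin of the numerator $[\phi^2-|y_2|\phi\phi']N+\tfrac1{|y_2|}\phi\phi'\,Q$, with $N$ the "full" curvature numerator and $Q$ its $\m_2$-shadow obtained by replacing one slot with $x_2$. (3) Normalize: divide numerator and denominator by the same power of $\phi$, use $|y|=1$ (so the argument of $\phi$ is $|y_2|$ throughout) and the $\langle\cdot,\cdot\rangle$-orthonormality of $\{x,y\}$ to clean up, and read off the stated formula. (4) Finally, check the Riemannian consistency limit $\phi\equiv1$ (then the formula collapses to $K=N$, the naturally reductive Riemannian flag/sectional curvature) as a sanity check.

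**Main obstacle.** The hard part will be Step (2): correctly bookkeeping how $g_y$ — which is not bi-invariant and has that rank-one bump in the $y_2$-direction — interacts with the bracket terms, so that the curvature numerator cleanly separates into the full term $N$ and its $\m_2$-projection $Q$. One must verify that the only way $\m_2$-directions enter differently from $\m_1$-directions is through the single scalar ratio $\phi'/\phi$ (and the second-order piece only in the denominator, via $g_y(x,x)$), which uses crucially that $[\m_1,\m_2]\subset\h$ so there is no cross-term mixing; any curvature contribution that would couple $\m_1$- and $\m_2$-parts of $[x,y]_\m$ inside $g_y$ must be shown to vanish. A secondary technical point is justifying that Huang's $\eta\equiv0$ formula may be applied with the Finslerian $g_y$ even though the underlying connection is the Riemannian one of $\alpha$: this is exactly the content of the Berwald property of naturally reductive metrics, so it is available, but the formula must be quoted in its Finslerian (not Riemannian) form so that $g_y$ — and hence $\phi$ — appears in the weights. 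Once the separation $N,Q$ and the denominator identification are in place, the remainder is routine homogenization and simplification.
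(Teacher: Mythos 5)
Your proposal follows essentially the same route as the paper: compute the fundamental tensor $g_y$ of $F=|y|\phi(|y_2|/|y|)$ explicitly, use the Lie-algebraic curvature operator of a naturally reductive (hence Berwald) metric, $R_y(x)=-[[x,y]_{\h},y]-\tfrac14[[x,y]_{\m},y]_{\m}$, pair it with $g_y$, observe that the term $\langle [R_y(x)]_2,y_2\rangle$ vanishes (so only $N$ and its $\m_2$-shadow $Q$ survive in the numerator), and divide by $g_y(y,y)g_y(x,x)-g_y(x,y)^2$. The only cosmetic difference is that the paper quotes Deng--Hou's curvature formula (Theorem 2.1 of \cite{DH2004}) directly rather than Huang's spray-vector formula specialized to $\eta\equiv0$, which yields the same operator.
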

\begin{proof}
For any $y\in \m\backslash\{0\}$ and $u,v\in\m$, denote $y=y_1+y_2, u=u_1+u_2$ and $v=v_1+v_2$ with $y_i,u_i,v_i\in \m_i, i=1,2$. Let $F$ be a Minkowski norm on $\m$ and $g$ the fundamental tensor of $F$. We write
$F(y)= |y|^2\phi(\frac{|y_2|}{|y|})$. So
\[
F^2(y+su+tv)=\langle y+su+tv, y+su+tv \rangle\phi^2(\frac{\sqrt{\langle y_2+su_2+tv_2, y_2+su_2+tv_2\rangle}}{\sqrt{\langle y+su+tv, y+su+tv\rangle}}),
\]
then we have
\begin{eqnarray}\label{Formula}
% \nonumber to remove numbering (before each equation)
  g_y(u,v) &=& \frac{1}{2}\frac{\partial^2}{\partial s\partial t}F^2(y+su+tv)|_{s=t=0} \nonumber \\
           &=& \langle u,v\rangle \phi^2(\frac{|y_2|}{|y|})+\phi(\frac{|y_2|}{|y|})\phi^{'}(\frac{|y_2|}{|y|})(
           \frac{\langle y_2,u_2\rangle \langle y,v\rangle }{|y||y_2|}+ \frac{\langle y_2,v_2\rangle \langle y,u\rangle }{|y||y_2|}-\frac{|y_2|\langle y, u \rangle \langle y,v\rangle}{|y|^3} \nonumber\\
           &&+\frac{\langle u_2,v_2\rangle|y|}{|y_2|}-\frac{\langle u,v\rangle|y_2|}{|y|}-\frac{|y|\langle y_2, u_2 \rangle \langle y_2,v_2\rangle}{|y_2|^3})+(\phi^{'2}(\frac{|y_2|}{|y|})+\phi(\frac{|y_2|}{|y|})\phi^{''}(\frac{|y_2|}{|y|}))\cdot \nonumber\\
           &&(\frac{\langle y_2,u_2\rangle}{|y_2|}-\frac{\langle y,u\rangle|y_2|}{|y|^2})(\frac{\langle y_2,v_2\rangle}{|y_2|}-\frac{\langle y,v\rangle|y_2|}{|y|^2}).
               \end{eqnarray}
From the above formula, we obtain
\begin{eqnarray}\label{6}
% \nonumber to remove numbering (before each equation)
&&g_y(y,y)=\langle y,y\rangle \phi^2(\frac{|y_2|}{|y|})=F^2(y).
\end{eqnarray}
By the unit orthonormality of $x, y$, denote $x=x_1+x_2$,  and $y=y_1+y_2$, $x_i,y_i \in \m_i, i=1,2$, it is easy to see
\begin{eqnarray}\label{7}
% \nonumber to remove numbering (before each equation)
&&g_y(y,x)=\frac{1}{|y_2|}\phi(|y_2|)\phi^{'}(|y_2|)\langle x_2, y_2\rangle.
\end{eqnarray}
By \ref{Formula}, we have
\begin{eqnarray}\label{8}
% \nonumber to remove numbering (before each equation)
g_y(x,x)&=&\phi^2(|y_2|)+\phi(|y_2|)\phi^{'}(|y_2|)(\frac{\langle x_2, x_2\rangle}{|y_2|}- \frac{\langle x_2, y_2\rangle\langle x_2, y_2\rangle}{|y_2|^3}-|y_2|) \nonumber \\
&&+(\phi^{'2}(|y_2|)+\phi(|y_2|)\phi^{''}(|y_2|))(\frac{\langle x_2, y_2\rangle\langle x_2, y_2\rangle}{|y_2|^2}).
\end{eqnarray}
Moreover,
\begin{eqnarray}\label{9}
% \nonumber to remove numbering (before each equation)
  g_y(R(x,y)y, x) &=& \phi^2(|y_2|)\langle R(x, y)y, x\rangle+\phi(|y_2|)\phi^{'}(|y_2|)(\frac{\langle [R(x, y)y]_2, x_2\rangle}{|y_2|}\nonumber\\
  &&-\frac{\langle y_2, x_2\rangle\langle [R(x, y)y]_2, y_2\rangle}{|y_2|^3}- |y_2|\langle R(x, y)y, x\rangle) \nonumber\\
  &&+(\phi^{'2}(|y_2|)+\phi(|y_2|)\phi^{''}(|y_2|))\cdot \frac{\langle [R(x, y)y]_2, y_2\rangle}{|y_2|}\cdot \frac{\langle y_2, x_2\rangle}{|y_2|},
     \end{eqnarray}
where $[R(x, y)y]_2$ denote the component of $R(x, y)y$ in $\m_2$.

According to Theorem 2.1 in \cite{DH2004}, the Riemann curvature $R_y(x)$ at $o\in(G/H,F)$ is given by
\begin{eqnarray*}
% \nonumber to remove numbering (before each equation)
&&R_y(x)=R(x,y)y = -[[x,y]_{\h},y]-\frac{1}{4}[[x,y]_{\m},y]_{\m}.
\end{eqnarray*}
Thus, we obtain
\begin{eqnarray}\label{10}
% \nonumber to remove numbering (before each equation)
 \langle R_y(x), x\rangle&=& \langle -\frac{1}{4}[[x,y]_{\m}, y]_{\m}, x\rangle- \langle[[x,y]_{\h}, y], x\rangle \nonumber \\
  &=& \frac{1}{4}\langle [x,y]_{\m}, [x,y]_{\m}\rangle+\langle[[x,y]_{\h}, x], y\rangle,
\end{eqnarray}
\begin{eqnarray}\label{11}
% \nonumber to remove numbering (before each equation)
 \langle [R_y(x)]_2, x_2\rangle&=& \langle -\frac{1}{4}[[x,y]_{\m}, y]_{\m}, x_2\rangle- \langle[[x,y]_{\h}, y], x_2\rangle \nonumber \\
  &=& \frac{1}{4}\langle [x_2,y]_{\m}, [x,y]_{\m}\rangle+\langle[[x,y]_{\h}, x_2], y\rangle,
\end{eqnarray}
\begin{eqnarray}\label{12}
% \nonumber to remove numbering (before each equation)
 \langle [R_y(x)]_2, y_2\rangle&=& \langle -\frac{1}{4}[[x,y]_{\m}, y]_{\m}, y_2\rangle- \langle[[x,y]_{\h}, y_2], y_2\rangle \nonumber \\
  &=& \frac{1}{4}\langle [y_2,y]_{\m}, [x,y]_{\m}\rangle+\langle[[x,y]_{\h}, y_2], y_2\rangle \nonumber \\
  &=&0.
\end{eqnarray}
substituting \ref{6}-\ref{12} into
\begin{eqnarray*}\label{13}
% \nonumber to remove numbering (before each equation)
 K(o,y,\mathbf{P})&=& \frac{g_y(R_y(x), x)}{g_y(y,y)g_y(x,x)-g_y(x,y)^2},
\end{eqnarray*}
  we completes the proof.
\end{proof}

\noindent
{\bf Acknowledgement}.
This paper is supported by National Natural Science Foundation of China (12001007, 11821101, 12131012),
Beijing Natural Science Foundation (1222003, Z180004),
Natural Science Foundation of Anhui province (1908085QA03).

\end{document}